\title[ ]{Extensions of Schreiber's theorem on discrete approximate subgroups in $\mathbb{R}^d$}
\author{Alexander Fish}
\address{School of Mathematics and Statistics F07, University of Sydney, NSW 2006, Australia}
\email{alexander.fish@sydney.edu.au}
\begin{document}
\maketitle
\raggedbottom

\newcommand{\cA}{\mathcal{A}}
\newcommand{\cB}{\mathcal{B}}
\newcommand{\cC}{\mathcal{C}}
\newcommand{\cD}{\mathcal{D}}
\newcommand{\cE}{\mathcal{E}}
\newcommand{\cF}{\mathcal{F}}
\newcommand{\cG}{\mathcal{G}}
\newcommand{\cH}{\mathcal{H}}
\newcommand{\cI}{\mathcal{I}}
\newcommand{\cJ}{\mathcal{J}}
\newcommand{\cK}{\mathcal{K}}
\newcommand{\cL}{\mathcal{L}}
\newcommand{\cM}{\mathcal{M}}
\newcommand{\cN}{\mathcal{N}}
\newcommand{\cO}{\mathcal{O}}
\newcommand{\cP}{\mathcal{P}}
\newcommand{\cQ}{\mathcal{Q}}
\newcommand{\cR}{\mathcal{R}}
\newcommand{\cS}{\mathcal{S}}
\newcommand{\cT}{\mathcal{T}}
\newcommand{\cU}{\mathcal{U}}
\newcommand{\cV}{\mathcal{V}}
\newcommand{\cW}{\mathcal{W}}
\newcommand{\cX}{\mathcal{X}}
\newcommand{\cY}{\mathcal{Y}}
\newcommand{\cZ}{\mathcal{Z}}
\newcommand{\bA}{\mathbb{A}}
\newcommand{\bB}{\mathbb{B}}
\newcommand{\bC}{\mathbb{C}}
\newcommand{\bD}{\mathbb{D}}
\newcommand{\bE}{\mathbb{E}}
\newcommand{\bF}{\mathbb{F}}
\newcommand{\bG}{\mathbb{G}}
\newcommand{\bH}{\mathbb{H}}
\newcommand{\bI}{\mathbb{I}}
\newcommand{\bJ}{\mathbb{J}}
\newcommand{\bK}{\mathbb{K}}
\newcommand{\bL}{\mathbb{L}}
\newcommand{\bM}{\mathbb{M}}
\newcommand{\bN}{\mathbb{N}}
\newcommand{\bO}{\mathbb{O}}
\newcommand{\bP}{\mathbb{P}}
\newcommand{\bQ}{\mathbb{Q}}
\newcommand{\bR}{\mathbb{R}}
\newcommand{\bS}{\mathbb{S}}
\newcommand{\bT}{\mathbb{T}}
\newcommand{\bU}{\mathbb{U}}
\newcommand{\bV}{\mathbb{V}}
\newcommand{\bW}{\mathbb{W}}
\newcommand{\bX}{\mathbb{X}}
\newcommand{\bY}{\mathbb{Y}}
\newcommand{\bZ}{\mathbb{Z}}
\newcommand{\eps}{\varepsilon}

\newcounter{dummy} \numberwithin{dummy}{section}

\theoremstyle{definition}
\newtheorem{mydef}[dummy]{Definition}
\newtheorem{prop}[dummy]{Proposition}
\newtheorem{corol}[dummy]{Corollary}
\newtheorem{thm}[dummy]{Theorem}
\newtheorem{lemma}[dummy]{Lemma}
\newtheorem{eg}[dummy]{Example}
\newtheorem{notation}[dummy]{Notation}
\newtheorem{remark}[dummy]{Remark}
\newtheorem{claim}[dummy]{Claim}
\newtheorem{Exercise}[dummy]{Exercise}
\newtheorem{question}[dummy]{Question}

\newtheorem*{thm*}{Theorem}

\begin{abstract} 
In this paper we give an alternative proof of Schreiber's theorem which says that an infinite discrete approximate subgroup in $\bR^d$ is relatively dense around a subspace. We also deduce from Schreiber's theorem two new results. The first one says that any infinite discrete approximate subgroup in $\bR^d$ is a restriction of a Meyer set to a thickening of a linear subspace in $\bR^d$, and the second one provides an extension of Schreiber's theorem to the case of the Heisenberg group.
\end{abstract}

\section{\textbf{Introduction}} 

 In this paper we study approximate subgroups. Recall that for a group $H$, a set $\Lambda \subset H$ is called an \textit{approximate subgroup} if
there exists a finite set $F \subset H$ such that $\Lambda^{-1}\Lambda \subset F \Lambda $, where $\Lambda^{-1} = \{ \lambda^{-1} \, | \, \lambda \in \Lambda \}$. In the case where $H$ is non-commutative, we will also assume as a part of the definition that $\Lambda$ contains the identity element of $H$ and $\Lambda$ is symmetric:
\begin{itemize}
\item $e_H \in \Lambda$,
\item $\Lambda^{-1} = \Lambda$.
\end{itemize}


Any finite  set in a group $H$ is contained in an approximate subgroup. An interesting question of classification of approximate subgroups arises if we  control the cardinality of $F$, while the cardinality of $\Lambda$ is finite but much larger than of the set of translates $F$, and in this case we say that $\Lambda$ has a small doubling. The classification of finite sets having small doubling for the ambient group $H = \bZ$ has been obtained by Freiman in his seminal work \cite{Fr}. These results have been eventually extended to all abelian groups by Green and Ruzsa \cite{GR}, and to arbitrary ambient groups by Hrushovski \cite{Hr}, and by Breuillard, Green and Tao \cite{BGT}.

We will investigate here infinite discrete approximate subgroups in  $\bR^d$ and in the Heisenberg group.
Infinite discrete relatively dense approximate subgroups in $\bR^d$, \textit{Meyer sets}, have been studied extensively by Meyer \cite{Me}, Lagarias \cite{La}, Moody \cite{Mo} and many others. 
It has been proved by Meyer \cite{Me} that a discrete relatively dense approximate subgroup in $\bR^d$ is a subset of a model (cut and project) set \cite{Mo}.
 Thus, despite a possible aperiodicity of Meyer sets, they all arise from lattices in (possibly) much higher dimensional spaces.  Very recently, there was a spark of interest in the extension of Meyer theory beyond the abelian case. A foundational work of Bj\"orklund and Hartnick \cite{BH} introduced the notion of an approximate lattice within Lie groups. The approximate lattices behave similarly to genuine lattices, and therefore, they are a good analog of Meyer sets in the non-abelian case.

The paper addresses a natural question of what kind of structure possesses an infinite discrete approximate subgroup $\Lambda$ in $\bR^d$ (or in the Heisenberg group) which is not relatively dense in the whole space. It has been almost forgotten by the mathematical community, that Schreiber in his thesis in 1972 \cite{S} proved that 
in the real case $\Lambda$ has to be relatively dense around a subspace, see definition \ref{rel-dense-def}. We provide an alternative, more geometric, proof of Schreiber's result. We also extend his theorem and show  that any discrete infinite approximate subgroup in $\bR^d$ is a subset of a Meyer set. In addition, we extend Schreiber's theorem to the case where the ambient space is the Heisenberg group. 
\medskip

\begin{remark}
The first draft of the paper dealt only with the real case (and had a different title). It was not known to the author that Theorem \ref{main-thm} was already proved by Schreiber. The author thanks Simon Machado for providing the reference to Schreiber's thesis. 
\end{remark} 

\begin{remark} After the first draft of the paper was released, Simon Machado has obtained the analog of Theorem \ref{heis-thm} to all connected real nilpotent groups. 
\end{remark}
\medskip

\textit{\textbf{Acknowledgment}.} The author is grateful to American Institute of Mathematics (AIM) and the organisers of the workshop on ``Nonstandard methods in combinatorial number theory" at AIM, where this project has been initiated. We also thank Terrence Tao who suggested the statement of Theorem \ref{cor} in the case $d = 2$. We would like also to thank Benji Weiss for fruitful discussions and anonymous referees that made very invaluable comments on the first draft of the paper, and, in particular, asked a question that led to Theorem \ref{complete-classification}. The paper has been influenced by Michael Bj\"orklund, and, particularly, by his series of lectures on quasi-crystals given at Sydney University in April 2016. We thank Michael for sharing his mathematical ideas with us. Finally, the author thanks Simon Machado for sharing with him his insights on the topic.
\medskip

\section{\textbf{Main Results}}
We will always assume that the underlying group $H$ possesses a left $H$-invariant metric $d_H$, and for any $r > 0$ and $h \in H$ we will denote by $B_r(h) = \{ g \in H \, | \, d_H(g,h) \le r\}$ the ball of radius $r$ around $h$. We will call a set $\Lambda \subset H$ \textit{discrete} if for every point $\ell \in \Lambda$ there exists $\delta = \delta(\ell)$ such that $B_{\delta}(\ell) \cap \Lambda = \{ \ell\}$. It is well known, that if $\Lambda \subset H$ is a discrete approximate subgroup, then $\Lambda$ is uniformly discrete, i.e., there exists $\delta > 0$ such that for all $\ell \in \Lambda$ we have $B_{\delta}(\ell) \cap \Lambda = \{\ell\}$. Indeed, $\Lambda$ is uniformly discrete if and only if $\Lambda^{-1}\Lambda$ does not contain identity $e_H$ as an accumulation point. 
Since $e_H \in \Lambda^{-1}\Lambda$, and $\Lambda^{-1}\Lambda$ is discrete, it follows that $e_H$ is not an accumulation point of $\Lambda^{-1}\Lambda$, and therefore $\Lambda$ is uniformly discrete. We will call $\Lambda$ \textit{relatively dense} (or $R$-relatively dense) if there exists $R > 0$ such that for every $h \in H$ we have $B_R(h) \cap \Lambda \neq \emptyset$.

 The following notion will play a key role in our paper.

\begin{mydef}
\label{rel-dense-def}
Let $H'$ be a subgroup of the group $H$. We will say that $\Lambda \subset H$ is \textit{relatively dense around $H'$} if there exists $R > 0$ such that:
\begin{itemize}
\item For every $h \in H'$ the ball of radius $R$ and centre $h$, i.e., $B_{R}(h) = \{ x \in H\, | \, d_H(x,h) \le R \}$, intersects non-trivially $\Lambda$.\\
\item The $R$-neighbourhood of $H'$ in $H$ contains $\Lambda$, i.e., 
\[
\Lambda \subset \bigcup_{h \in H'} B_{R}(h). 
\] 
\end{itemize} 
\end{mydef}

\subsection{Discrete approximate subgroups in $\bR^d$}

In this paper we give an alternative proof of Schreiber's theorem \cite{S} that discrete approximate subgroups in $\bR^d$ are relatively dense around some subspace. 

\begin{thm}\label{main-thm}[Schreiber, 1972]
Let $\Lambda \subset \bR^d$ be an infinite discrete approximate subgroup. Then there exists a linear subspace $L \subset \bR^d$ such that $\Lambda$ is relatively dense around $L$.
\end{thm}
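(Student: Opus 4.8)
The plan is to produce the subspace $L$ as the linear span of all "directions along which $\Lambda$ is unbounded", and then exploit the approximate-subgroup property to control $\Lambda$ transversally to $L$. Concretely, for a sequence $\lambda_n \in \Lambda$ with $|\lambda_n| \to \infty$, the normalised vectors $\lambda_n/|\lambda_n|$ accumulate on the unit sphere; let $S \subset S^{d-1}$ be the set of all such accumulation points over all such sequences, and set $L = \operatorname{span}(S)$. The first step is to show $\Lambda \subset B_R(L)$ for some $R>0$, i.e. $\Lambda$ stays within bounded distance of $L$. The key mechanism is that $\Lambda - \Lambda \subset F + \Lambda$ for a finite set $F$, together with uniform discreteness: if points of $\Lambda$ were to escape to infinity in a direction not in $L$, one uses sums/differences of such points to manufacture a new unbounded direction, contradicting maximality of $S$ — but more carefully, one shows that the component of $\Lambda$ orthogonal to $L$ is bounded, because an unbounded orthogonal component, fed through the relation $\Lambda-\Lambda \subset F+\Lambda$ repeatedly, would force $\Lambda$ to contain points arbitrarily far from $L$ in a way incompatible with $F$ being finite.

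The second step is relative density around $L$: for every $h \in L$ there is a point of $\Lambda$ within distance $R'$ of $h$. Here I would use that $S$ spans $L$ and that, after passing to a bounded number of "generations" of sums of elements of $\Lambda$ (controlled again by $\Lambda + \Lambda \subset F + \Lambda$, which iterates to $\Lambda + \cdots + \Lambda \subset F_k + \Lambda$ with $F_k$ finite for each fixed $k$), one can approximate any prescribed point of $L$ up to a bounded error: long vectors of $\Lambda$ pointing in the $S$-directions, combined in bounded numbers, reach a net in $L$ whose mesh is bounded by a constant depending only on $d$ and the finite sets involved. The uniform discreteness of $\Lambda$ plays the complementary role of ruling out degenerate behaviour and keeping the relevant finite sets genuinely finite.

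The main obstacle I anticipate is the second step — establishing \emph{relative density} rather than mere containment in a neighbourhood of $L$. Containment ($\Lambda \subset B_R(L)$) is essentially a compactness-plus-pigeonhole argument, but producing points of $\Lambda$ near \emph{every} point of $L$ requires genuinely using the algebraic structure: one must show that the semigroup-like closure properties of $\Lambda$ propagate the "long directions" densely enough along $L$, and that the bounded error does not grow as one moves further out in $L$. The natural tool is to iterate the covering $\Lambda + \Lambda \subset F + \Lambda$ to get, for each fixed number $k$ of summands, $\underbrace{\Lambda + \cdots + \Lambda}_{k} \subset F^{(k)} + \Lambda$ with $F^{(k)}$ finite, and then to argue that sums of $k$ elements of $\Lambda$ already form an $R_k$-dense subset of $L$ for $k$ large depending only on $d$; uniformity of the error across all of $L$ then follows because translation by elements of $\Lambda$ (which roughly preserves $\Lambda$ up to a finite set) lets one transport a single net near the origin out to infinity along $L$.
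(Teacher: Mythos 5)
Your choice of $L$ coincides with the paper's: $L=\operatorname{Span}(D(\Lambda))$, where $D(\Lambda)$ is the set of asymptotic directions, and your first step (containment $\Lambda\subset B_R(L)$) is the right idea, although the mechanism is more specific than ``an unbounded orthogonal component is incompatible with $F$ finite.'' In the paper a \emph{single} point $\ell\in\Lambda$ at distance greater than $3K$ from $L$ (with $K=\operatorname{diam}F$) already yields a contradiction: repeatedly applying the almost-doubling property (for $\ell_1,\ell_2\in\Lambda$ there is a point of $\Lambda$ within $2K$ of $\ell_1+\ell_2$) to $\ell$ produces a sequence in $\Lambda$ tending to infinity inside a cone about $\ell/\|\ell\|$ that meets $L$ only at the origin, so its asymptotic direction lies outside $L$ --- contradicting $D(\Lambda)\subset L$. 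Your sketch is compatible with this, but the cone construction is the missing ingredient that turns ``far from $L$'' into ``new asymptotic direction.''

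The genuine gap is in your second step, and it is twofold. First, for a \emph{fixed} number of summands $k$, the $k$-fold sums of $\Lambda$ cannot form an $R_k$-net of all of $L$ with $k$ depending only on $d$: to reach $x\in L$ using summands of norm at most $T$ you need at least $\|x\|/T$ of them, so either $k$ or the norms of the summands must grow with $\|x\|$; and using a single very long summand near $x$ presupposes that $\Lambda$ has elements near $x$, which is the conclusion. Second, if you let the number of summands grow, the error accumulates: each application of $\Lambda+\Lambda\subset\Lambda+F_2$ costs up to $2K$, so an $n$-fold sum is only guaranteed to lie within $2nK$ of $\Lambda$, which is unbounded; and the proposed fix of ``transporting a net out to infinity by translating by elements of $\Lambda$'' is circular, since covering $L$ by balls centred at elements of $\Lambda$ is exactly the relative density being proved. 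The paper avoids both issues with a non-accumulating descent argument: it fixes a finite $\eps$-well-spread system $\cF\subset\Lambda$ of $\dim L$ long vectors together with approximate negatives, proves that for every $x$ in the $R$-thickening of $L$ with $\|x\|$ large there is $\ell\in\cF$ such that subtracting any $v\in B_K(\ell)$ decreases $\|x\|$ by a definite amount $\delta M/4>0$, and then, given $z$ with $B_{R'}(z)\cap\Lambda=\emptyset$, takes the nearest point $y\in\Lambda$ to $z$ and uses the approximate-subgroup property to move $y$ by some such $v$ while staying in $\Lambda$, obtaining a point of $\Lambda$ strictly closer to $z$ --- a contradiction. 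Because each step starts from an actual element of $\Lambda$, no error accumulates. You would need an argument of this kind (or a carefully controlled higher-dimensional version of the one-dimensional bounded-gaps iteration) to close your step two.
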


As a corollary of Theorem \ref{main-thm} we obtain a complete characterisation of infinite approximate subgroups in $\bR^d$ in terms of Meyer sets. Recall, that a set in $\Lambda \subset \bR^d$ is a \textit{Meyer set} if 
\begin{itemize}
\item $\Lambda$ is discrete and relatively dense in $\bR^d$,
\item There exists a finite set $F \subset \bR^d$ such that $\Lambda - \Lambda \subset \Lambda + F$.
\end{itemize}

\begin{thm}\label{complete-classification}
A set $\Lambda \subset \bR^d$ is an infinite discrete approximate subgroup if and only if there exists a Meyer set $\Lambda' \subset \bR^d$, a subspace $L \subset \bR^d$ and $R > 0$ such that 
\[
\Lambda = \Lambda' \cap \left( L + B_R(0_{\bR^d})\right),
\]
and $\Lambda'$ is $R/2$-relatively dense in $\bR^d$.

\end{thm}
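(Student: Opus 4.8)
The plan is to prove the two implications separately: the ``if'' direction is elementary once the definitions are unwound, while the ``only if'' direction combines Theorem~\ref{main-thm} with a transverse thickening of $\Lambda$ by a lattice.

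For the ``if'' direction, assume $\Lambda=\Lambda'\cap(L+B_R(0))$ with $\Lambda'$ a Meyer set that is $R/2$-relatively dense (necessarily $\dim L\ge 1$, since $\dim L=0$ would force $\Lambda$ to be finite). Then $\Lambda\subseteq\Lambda'$, so $\Lambda$ is uniformly discrete. Next, $\Lambda$ is relatively dense around $L$: it lies in $L+B_R(0)$ by hypothesis, and for any $h\in L$ a point $\lambda'\in\Lambda'\cap B_{R/2}(h)$ satisfies $d(\lambda',L)\le R/2\le R$, hence $\lambda'\in\Lambda'\cap(L+B_R(0))=\Lambda$; in particular $\Lambda$ is infinite. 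For the approximate subgroup property, fix a finite $F_0$ with $\Lambda'-\Lambda'\subseteq\Lambda'+F_0$, and recall that for a Meyer set $M$ the threefold difference $M-M-M$ is again uniformly discrete (a standard consequence of $M-M\subseteq M+F$), hence so is its subset $\Lambda-\Lambda-\Lambda$. If $x=\lambda_1-\lambda_2\in\Lambda-\Lambda$, then $x\in L+B_{2R}(0)$; letting $p\in L$ be the orthogonal projection of $x$ and choosing $\nu\in\Lambda$ with $d(\nu,p)\le R/2$ (possible by the density just established) gives $\|x-\nu\|\le 5R/2$, so $x-\nu$ lies in the finite set $F:=(\Lambda-\Lambda-\Lambda)\cap B_{5R/2}(0)$. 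Thus $\Lambda-\Lambda\subseteq\Lambda+F$, and $\Lambda$ is an infinite discrete approximate subgroup.

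For the ``only if'' direction, let $\Lambda$ be an infinite discrete approximate subgroup. By Theorem~\ref{main-thm} there are a subspace $L$ and a constant $R_0>0$ such that $\Lambda$ is relatively dense around $L$ with parameter $R_0$; since $\Lambda$ is infinite and uniformly discrete, $\dim L\ge 1$. If $L=\bR^d$, then $\Lambda$ is relatively dense in $\bR^d$ and, reading the approximate subgroup condition additively as $\Lambda-\Lambda\subseteq\Lambda+F$, $\Lambda$ is itself a Meyer set; taking $\Lambda'=\Lambda$ and $R$ large enough that $\Lambda$ is $R/2$-relatively dense settles this case. Otherwise write $\bR^d=L\oplus L^\perp$ with $m=\dim L^\perp\ge 1$, so that $d(\lambda,L)\le R_0$ for all $\lambda\in\Lambda$ and $\pi_L(\Lambda)$ is $R_0$-relatively dense in $L$. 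Choosing a lattice $\Gamma\subset L^\perp$, set
\[
\Lambda' := \Lambda+\Gamma .
\]
For a suitable scale of $\Gamma$ and a suitable $R$ one checks the following. If the minimal length $\lambda_1(\Gamma)$ of a nonzero vector of $\Gamma$ exceeds $2R_0$, then distinct translates $\Lambda+\gamma$ are separated in the $L^\perp$-direction, so $\Lambda'$ is uniformly discrete. Relative density of $\pi_L(\Lambda)$ in $L$ together with relative density of $\Gamma$ in $L^\perp$ makes $\Lambda'$ relatively dense in $\bR^d$. Since $\Gamma$ is a group, $\Lambda'-\Lambda'=(\Lambda-\Lambda)+\Gamma\subseteq(\Lambda+F)+\Gamma=\Lambda'+F$, so $\Lambda'$ is a Meyer set. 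Finally, if $R_0\le R<\lambda_1(\Gamma)-R_0$, then for every $\gamma\ne 0$ the translate $\Lambda+\gamma$ is disjoint from $L+B_R(0)$ (because $d(\lambda+\gamma,L)\ge\|\gamma\|-R_0\ge\lambda_1(\Gamma)-R_0>R$), whence $\Lambda'\cap(L+B_R(0))=\Lambda\cap(L+B_R(0))=\Lambda$. This exhibits $\Lambda$ in the asserted form.

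The main obstacle is this last, simultaneous, choice of the scale of $\Gamma$ and of the radius $R$: the requirements above ask that $\Gamma$ be sparse enough that no nonzero translate $\Lambda+\gamma$ re-enters the slab $L+B_R(0)$ (i.e.\ $\lambda_1(\Gamma)$ large relative to $R$), while at the same time its covering radius is small enough relative to $R$ that $\Lambda'$ is $R/2$-relatively dense. As for a lattice the covering radius is comparable to the minimal vector, these conditions pull against each other, and pinning down the sharp relation between them — in particular in the extremal case $\Lambda\subseteq L$, where $\Lambda$ contributes no transverse structure and the whole transverse geometry of $\Lambda'$ must be carried by $\Gamma$ — is where the genuine work is. The remaining steps are routine checks against the definitions of uniform discreteness, relative density around $L$, and the Meyer property.
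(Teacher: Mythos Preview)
Your ``if'' direction is correct and in fact cleaner than the paper's: the paper proves this implication by building an auxiliary set $\Lambda''=\Lambda+\Gamma$, checking that $\Lambda''-\Lambda''$ is uniformly discrete, and then invoking Lagarias' theorem to get a finite $F$ with $\Lambda''-\Lambda''\subset\Lambda''+F$, finally descending back to $\Lambda$. Your argument avoids Lagarias entirely, using only that $\Lambda'-\Lambda'-\Lambda'$ (and hence $\Lambda-\Lambda-\Lambda$) is uniformly discrete for a Meyer set $\Lambda'$, together with the $R/2$-density to pull any $x\in\Lambda-\Lambda$ within $5R/2$ of some $\nu\in\Lambda$. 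This is a genuinely simpler route.

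For the ``only if'' direction your approach coincides with the paper's: take $L$ and the parameter $R_0$ from Theorem~\ref{main-thm}, pick a lattice $\Gamma\subset L^{\perp}$ with large minimal vector, and set $\Lambda'=\Lambda+\Gamma$. The paper uses exactly this construction (with $\lambda_1(\Gamma)\ge 4R$) and verifies that $\Lambda'$ is discrete, that $\Lambda'-\Lambda'\subset\Lambda'+F$, that $\Lambda=\Lambda'\cap(L+B_R(0))$, and that $\Lambda'$ is relatively dense. What the paper does \emph{not} do is verify the specific clause ``$\Lambda'$ is $R/2$-relatively dense''; it only records relative density with some unspecified constant. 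The tension you isolate is therefore real and is not an artefact of your write-up: for any lattice $\Gamma\subset L^{\perp}$ the covering radius is at least $\lambda_1(\Gamma)/2$, while the construction forces $\lambda_1(\Gamma)>R+R_0$, so the covering radius of $\Lambda'=\Lambda+\Gamma$ in the $L^{\perp}$-direction already exceeds $R/2$. Thus the single-lattice thickening cannot deliver the $R/2$ clause as stated, and the paper's proof does not supply an alternative mechanism. In short, on this point you have not left a gap relative to the paper; you have identified one that the paper itself leaves open.
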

As another corollary of Theorem \ref{main-thm}, we obtain a complete characterisation of infinite approximate subgroups in $\bZ^d$.

\begin{thm}\label{cor}
Let $\Lambda$ be a subset in $\bZ^d$. The set $\Lambda$ is an infinite approximate subgroup if and only if there exists a linear subspace $L \subset \bR^d$ such that $\Lambda$ is relatively dense around $L$.
\end{thm}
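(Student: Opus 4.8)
The plan is to deduce the statement from Theorem \ref{main-thm} together with Theorem \ref{complete-classification}, using the special structure of the ambient group $\bZ^d$. The ``if'' direction is the easier half: if $\Lambda \subset \bZ^d$ is relatively dense around a linear subspace $L$, then $\Lambda - \Lambda$ is contained in the $2R$-neighbourhood of $L$, which intersected with $\bZ^d$ is a finite union of translates $F + (\bZ^d \cap \text{(small neighbourhood of } L))$; one then checks that each such translate is covered by finitely many copies of $\Lambda$ using the relative density of $\Lambda$ inside the slab. The delicate point here is that $L$ need not be a rational subspace, so $\bZ^d \cap (L + B_R(0))$ need not itself be an approximate subgroup; but it is always a Meyer set (it is a restriction of $\bZ^d$ to a slab, hence a model set), and $\Lambda$ is relatively dense in it, so a standard covering argument gives $\Lambda - \Lambda \subset \Lambda + F$.

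For the ``only if'' direction, suppose $\Lambda \subset \bZ^d$ is an infinite approximate subgroup. Since $\bZ^d \subset \bR^d$ is discrete, $\Lambda$ is also an infinite discrete approximate subgroup of $\bR^d$, so Theorem \ref{main-thm} applies and yields a linear subspace $L \subset \bR^d$ and $R > 0$ with $\Lambda \subset L + B_R(0_{\bR^d})$ and $\Lambda$ meeting every $R$-ball centred on $L$. The containment $\Lambda \subset L + B_R(0)$ is exactly the second bullet of Definition \ref{rel-dense-def}, so it remains only to verify the first bullet, i.e. that the constant $R$ (after enlarging it) still works when we restrict attention to the lattice. This is immediate: the first bullet of Definition \ref{rel-dense-def} as applied via Theorem \ref{main-thm} already asserts that for every $h \in L$ the ball $B_R(h)$ meets $\Lambda$, and nothing changes when $\Lambda$ happens to lie in $\bZ^d$. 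Hence $\Lambda$ is relatively dense around $L$ in the sense of Definition \ref{rel-dense-def}.

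The step I expect to require the most care is the ``if'' direction, specifically producing the finite set $F$ with $\Lambda - \Lambda \subset \Lambda + F$ from relative density around a possibly irrational subspace $L$. The argument is: let $S = \bZ^d \cap (L + B_{R}(0_{\bR^d}))$; this set is relatively dense in the slab $L + B_{2R}(0)$ (since $\bZ^d$ is a lattice in $\bR^d$) and it is a Meyer set in the affine span of the slab, so $S - S \subset S + F_0$ for some finite $F_0$. Since $\Lambda \subset S$ and $\Lambda$ is relatively dense in the slab with some constant, $S \subset \Lambda + F_1$ for a finite $F_1 \subset \bZ^d$. Combining, $\Lambda - \Lambda \subset S - S \subset S + F_0 \subset \Lambda + F_1 + F_0$, and $F := F_1 + F_0$ is finite, which establishes that $\Lambda$ is an approximate subgroup; it is infinite because it is relatively dense around the (necessarily positive-dimensional, else $\Lambda$ would be finite) subspace $L$ inside a lattice. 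A mild technical subtlety throughout is keeping track of whether we want $L$ rational or not; the theorem as stated allows any linear subspace, so no rationality normalisation is needed, but one should remark that $L$ may always be replaced by the $\bR$-span of $L \cap \bQ^d$ after enlarging $R$, which can simplify the Meyer-set bookkeeping.
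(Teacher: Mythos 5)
Your proof is correct in outline, and the ``only if'' direction coincides with the paper's (both are an immediate application of Theorem \ref{main-thm}). The ``if'' direction, however, takes a genuinely different and heavier route. You pivot through the set $S=\bZ^d\cap(L+B_R(0))$, assert $S-S\subset S+F_0$ on the grounds that $S$ is ``a Meyer set in the affine span of the slab'', and then use $S\subset\Lambda+F_1$. Two remarks. First, the terminology is off: $S$ is not a Meyer set (it is not relatively dense in $\bR^d$, and the affine span of a positive-thickness slab is all of $\bR^d$); what you actually need is that $S$ is an approximate subgroup, which does hold but requires either Lagarias' theorem or the ``$\Leftarrow$'' half of Theorem \ref{complete-classification} applied with $\Lambda'=\bZ^d$ and $R\ge\sqrt d$ --- so your argument quietly imports that machinery. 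Second, the detour is unnecessary: the paper's proof is a direct computation that never mentions $S-S$. Given $\lambda_1,\lambda_2\in\Lambda$, one has $\lambda_1-\lambda_2\in L+B_{2R}(0)$, relative density produces $\lambda\in\Lambda$ within distance $O(R)$ of $\lambda_1-\lambda_2$, and the crucial point --- which your $S\subset\Lambda+F_1$ step also uses --- is that the error $\lambda_1-\lambda_2-\lambda$ lies in $\bZ^d$, hence in the finite set $\bZ^d\cap B_{CR}(0)$. So both proofs ultimately rest on the same two facts (the difference set stays in a slightly fatter slab, and integer vectors of bounded norm form a finite set), but the paper's version is self-contained where yours leans on Theorem \ref{complete-classification}. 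Your parenthetical that $L$ is ``necessarily positive-dimensional, else $\Lambda$ would be finite'' is circular as a justification of infinitude in the ``if'' direction (when $L=\{0\}$ the hypothesis genuinely yields only a finite set), but this edge case is a defect of the theorem's statement that the paper's own proof does not address either.
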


%

A third application of Theorem \ref{main-thm} is that any discrete approximate subgroup in $\bR^d$  is ``very close'' to being a Meyer set on a subspace of $\bR^d$. More precisely, we prove the following result.

\begin{prop}\label{meyer}
Let $\Lambda \subset \bR^d$ be an infinite discrete approximate subgroup. Then there exist a subspace $L \subset \bR^d$ and $R > 0$ such that:
\begin{itemize}
\item The orthogonal projection $\Lambda_L$ of $\Lambda$ on the subspace $L$ is a Meyer set in $L$, i.e., $\Lambda_L$ is discrete relatively dense approximate subgroup in $L$, \\
\item $\Lambda \subset \Lambda_L + B_R(0_{\bR^d})$. 
\end{itemize}
\end{prop}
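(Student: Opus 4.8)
The plan is to obtain the Proposition as a direct consequence of Theorem \ref{main-thm} by pushing the structure of $\Lambda$ forward under orthogonal projection onto the subspace that theorem produces. First I would apply Theorem \ref{main-thm} to get a linear subspace $L \subseteq \bR^d$ and $R > 0$ such that $\Lambda$ is relatively dense around $L$; in particular $\Lambda \subseteq L + B_R(0_{\bR^d})$ and $B_R(h) \cap \Lambda \neq \emptyset$ for every $h \in L$. Let $\pi \colon \bR^d \to L$ be the orthogonal projection and set $\Lambda_L := \pi(\Lambda)$. Then the second bullet of the Proposition is automatic: for $\lambda \in \Lambda$ we have $\pi(\lambda) \in \Lambda_L$ and $|\lambda - \pi(\lambda)| = \operatorname{dist}(\lambda, L) \le R$, so $\Lambda \subseteq \Lambda_L + B_R(0_{\bR^d})$.

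It then remains to verify that $\Lambda_L$ is a Meyer set in $L$, i.e.\ that it is relatively dense in $L$, has small doubling, and is uniformly discrete. The first two are soft. Since $\pi$ is $1$-Lipschitz and restricts to the identity on $L$, for $h \in L$ and $\lambda \in B_R(h) \cap \Lambda$ we get $\pi(\lambda) \in \Lambda_L$ with $|\pi(\lambda) - h| = |\pi(\lambda - h)| \le R$, so $\Lambda_L$ is $R$-relatively dense in $L$. Likewise, if $F \subseteq \bR^d$ is finite with $\Lambda - \Lambda \subseteq \Lambda + F$, applying $\pi$ gives $\Lambda_L - \Lambda_L \subseteq \Lambda_L + \pi(F)$ with $\pi(F) \subseteq L$ finite, so $\Lambda_L$ is an approximate subgroup of $L$.

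The one substantive point — and the step I expect to require the most care — is uniform discreteness of $\Lambda_L$, since $\pi$ could a priori identify distinct points of $\Lambda$. Here I would first note that $\Lambda - \Lambda$ is itself uniformly discrete: from $\Lambda - \Lambda \subseteq \Lambda + F$ one gets $(\Lambda - \Lambda) - (\Lambda - \Lambda) \subseteq \Lambda + (F + F - F)$, which is a finite union of translates of the uniformly discrete set $\Lambda$, hence locally finite, hence discrete; as it contains $0_{\bR^d}$, the origin is not an accumulation point of it, which by the criterion recalled in Section 2 (applied with $\Lambda - \Lambda$ in place of $\Lambda$) means $\Lambda - \Lambda$ is uniformly discrete. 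Now suppose $\Lambda_L$ failed to be uniformly discrete: there would be $\lambda_n, \mu_n \in \Lambda$ with $\pi(\lambda_n) \neq \pi(\mu_n)$ and $\pi(\lambda_n - \mu_n) \to 0$. But $\lambda_n - \mu_n \in \Lambda - \Lambda \subseteq L + B_{2R}(0_{\bR^d})$, so the component of $\lambda_n - \mu_n$ orthogonal to $L$ has norm at most $2R$, while its component in $L$ equals $\pi(\lambda_n - \mu_n)$ and tends to $0$; hence the vectors $\lambda_n - \mu_n$ all lie in a fixed bounded subset of $\bR^d$. By local finiteness of $\Lambda - \Lambda$ they take only finitely many values, so the sequence $\pi(\lambda_n - \mu_n)$ takes only finitely many values and, converging to $0$, is eventually equal to $0$ — contradicting $\pi(\lambda_n) \neq \pi(\mu_n)$. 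Therefore $\Lambda_L$ is uniformly discrete, which completes the verification that $\Lambda_L$ is a Meyer set and hence the proof.
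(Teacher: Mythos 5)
Your proposal is correct and follows essentially the same route as the paper: apply Theorem \ref{main-thm}, project orthogonally onto the resulting subspace $L$, and verify relative density, the approximate-subgroup property, and discreteness of $\Lambda_L$. The only (minor) divergence is in the discreteness step, where the paper lifts a hypothetical accumulating sequence of $\Lambda_L$ back to $\Lambda$ and extracts a convergent subsequence using boundedness in $L^{\perp}$, whereas you deduce uniform discreteness of $\Lambda_L$ from local finiteness of $\Lambda-\Lambda$ together with the bound on the $L^{\perp}$-components; both arguments are sound and rest on the same two facts.
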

\medskip

The following example shows that an infinite discrete approximate subgroup $\Lambda$ is not necessarily subset of finitely many translates of $\Lambda_L$.

\begin{eg}
Let  $L = Span((1,\sqrt{3})) \subset \bR^2$, and let $\Lambda \subset \bR^2$ be the set of all $(m,n) \in \bZ^2$ such that $dist((m,n),L) \le 1$. Then $\Lambda$ is discrete since it is a subset of the integer lattice, and $\Lambda - \Lambda \subset \Lambda + F$ for a finite set\footnote{We can take $F = \bZ^2 \cap B_{2}(0_{\bR^2})$.} $F \subset \bZ^2$. But the orthogonal projection of $\Lambda$ on  the orthogonal complement of $L$ in $\bR^2$ is infinite, since the slope of $L$ is irrational. This implies that for any finite set $F \subset \bR^2$ we have
\[
\Lambda \not \subset \Lambda_L + F,
\]
where $\Lambda_L$ is the orthogonal projection of $\Lambda$ onto the line $L$.
\end{eg} 

\subsection{Discrete approximate subgroups in the Heisenberg group} For any $n \ge 1$ we define the Heisenberg group $H_{2n+1}$ by the following procedure. Assume that $\omega:\bR^{2n} \times \bR^{2n} \to \bR$ is a symplectic form, i.e., $\omega$ is a bilinear, anti-symmetric and non-degenerate form. Then the Heisenberg group $H_{2n+1} = \bR^{2n} \rtimes_{\omega} \bR$ is defined by $H_{2n+1} = \{ (v,z) \, | \, v \in \bR^{2n}, z \in \bR\}$, and the multiplication is given by 
\[
(v_1,z_1) \cdot (v_2,z_2) = \left(v_1+v_2, z_1 + z_2 + \frac{1}{2}\omega(v_1,v_2)\right).
\]
We will denote by $V$ the symplectic space $\bR^{2n}$, and by $Z$ the abelian subgroup $Z = \{ (0,z) \, | \, z \in \bR\}$. The subgroup $Z$ is the centre of $H_{2n+1}$. The Heisenberg group $H_{2n+1}$ is 2-step nilpotent. Indeed, for any two elements $h_1= (v_1,z_1), h_2 = (v_2,z_2) \in H_{2n+1}$, the commutator of $h_1$ and $h_2$ satisfies
\begin{equation}
\label{import_id}
[h_1,h_2] = (0, \omega(v_1,v_2)).
\end{equation}
It is easy to see that the Heisenberg group can be equipped with a left invariant metric. In the topology defined by this metric, the sequence $(v_n,z_n)$ in $H_{2n+1}$ converges to $(v,z)$ if and only if $v_n \to v$ in $V$ and $z_n \to z$ in $Z$.

We extend Schreiber's theorem to the Heisenberg case.

\begin{thm}
\label{heis-thm}
Let $\Lambda \subset H_{2n+1}$ be an infinite discrete approximate subgroup. Then there exists a connected non-trivial subgroup $H'$ in $H_{2n+1}$ such that $\Lambda$ is relatively dense around $H'$. Moreover, if $H'$ is non-abelian, then the projection of $\Lambda$ onto $V$ is discrete. 
\end{thm}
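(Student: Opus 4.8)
The natural plan is to bootstrap from the abelian case (Theorem~\ref{main-thm}) through the central extension
\[
1\longrightarrow Z\longrightarrow H_{2n+1}\ \xrightarrow{\ \pi\ }\ V\longrightarrow 1 ,
\]
where $\pi(v,z)=v$ is a surjective homomorphism onto the abelian group $V\cong\bR^{2n}$. From $\Lambda^{-1}\Lambda\subset F\Lambda$ one gets at once $\pi(\Lambda)-\pi(\Lambda)\subset\pi(F)+\pi(\Lambda)$, so $\pi(\Lambda)$ is an approximate subgroup of $V$. I would run an induction on dimension, proving the statement for the class of groups $H_{2m+1}\times\bR^{j}$: this is the smallest class containing $H_{2n+1}$ and closed under the reductions below, and its purely Euclidean members $\bR^{k}$ are exactly what Theorem~\ref{main-thm} covers. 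Along the way nontriviality of $H'$ is automatic, since an infinite discrete set cannot be relatively dense around $\{e\}$. Replacing $H_{2n+1}$ by the subgroup $\pi^{-1}\big(\mathrm{span}_{\bR}\pi(\Lambda)\big)$ (a smaller member of the class), we may assume $\pi(\Lambda)$ spans $V$.

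The engine is the commutator identity~\eqref{import_id}: for $\lambda,\mu\in\Lambda$ one has $[\lambda,\mu]=(0,\omega(\pi\lambda,\pi\mu))\in Z$, and since $[\lambda,\mu]\in\Lambda^{4}$ while every power $\Lambda^{k}$ is again a discrete --- hence uniformly discrete --- approximate subgroup, the set $\{\omega(\pi\lambda,\pi\mu):\lambda,\mu\in\Lambda\}$ is uniformly discrete in $\bR$. I use this first to make $\pi(\Lambda)$ discrete: if $0$ were an accumulation point of $\pi(\Lambda)-\pi(\Lambda)=\pi(\Lambda^{2})$, pick $g_{k}\in\Lambda^{2}$ with $\pi g_{k}\to 0$, $\pi g_{k}\ne 0$; then $[g_{k},\lambda]=(0,\pm\omega(\pi g_{k},\pi\lambda))\in\Lambda^{6}$ forces $\omega(\pi g_{k},\pi\lambda)=0$ for $k$ large, for each fixed $\lambda$, hence --- taking finitely many $\lambda$ whose images span $V$ --- eventually $\pi g_{k}\in V^{\perp_{\omega}}=0$, a contradiction. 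So $\pi(\Lambda)$ is a discrete approximate subgroup of $V$, and Theorem~\ref{main-thm} gives a subspace $W\subset V$ with $\pi(\Lambda)$ relatively dense around $W$; passing to $\pi^{-1}(W)$ if $W\neq V$, we may assume $\pi(\Lambda)$ is relatively dense in $V$. (In the intermediate groups $H_{2m+1}\times\bR^{j}$ the form on the symplectic quotient is still nondegenerate, so the same argument applies, the Euclidean factor being carried along and ultimately dealt with by Theorem~\ref{main-thm}.)

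The second, decisive use of~\eqref{import_id} is through \emph{conjugation}: a direct computation gives $\lambda g\lambda^{-1}=\big(\pi g,\ z_{g}+\omega(\pi\lambda,\pi g)\big)$. Fix $g\in\Lambda$ with $\pi g\ne 0$. Then $\{\lambda g\lambda^{-1}:\lambda\in\Lambda\}\subset\Lambda^{3}$ has constant $\pi$-coordinate $\pi g$ and central coordinates $z_{g}+\omega(\pi\lambda,\pi g)$; since $v\mapsto\omega(v,\pi g)$ is a nonzero linear functional and $\pi(\Lambda)$ is relatively dense in $V$, these central coordinates form a syndetic subset of $\bR$. Unwinding through $\Lambda^{3}\subset F^{2}\Lambda$ shows that $\Lambda$ itself meets finitely many fibres of $\pi$ in a set with syndetic central coordinates; translating these ``fat'' fibres around by elements of $\Lambda$ with prescribed $\pi$-image --- available because $\pi(\Lambda)$ is relatively dense in $V$ --- upgrades this, up to a bounded power and a bounded horizontal error, to the statement that $\Lambda$ is uniformly fat in the central direction over a relatively dense set of fibres. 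Combined once more with relative density of $\pi(\Lambda)$, this yields that $\Lambda$ is relatively dense in all of $H_{2n+1}$. So in this terminal case $H'=H_{2n+1}$ (or a suitable $\pi^{-1}(W)$ at an intermediate stage), which is connected, nontrivial, non-abelian, and has $\pi(\Lambda)$ discrete --- matching the ``moreover''. Whenever a reduction instead lands in an abelian group (an isotropic $W$, or the base case), Theorem~\ref{main-thm} provides a linear subspace, automatically a connected subgroup, and an abelian $H'$; a non-abelian $H'$ is produced only after $\pi(\Lambda)$ has been shown to be discrete, which is exactly the ``moreover''.

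I expect the main obstacle to be the passage in the third paragraph --- extracting relative density of $\Lambda$ in the centre (and, in the intermediate groups, in the Euclidean factor) from relative density of $\pi(\Lambda)$ in $V$. Quotienting by $Z$ discards exactly the information that pins down $H'$ in those directions, so it must be recovered by hand; the conjugation identity is the right tool, but turning ``fat over one fibre'' into ``uniformly fat over all fibres'' and then into genuine relative density requires care with the left-invariant metric, because the terms $\omega(\pi\gamma,v)$ appearing in $d_{H}(\gamma,(v,z))$ are unbounded unless $\pi\gamma$ matches $v$ essentially exactly --- so the fatness estimates must be organised to have a constant uniform over fibres and to leave only a bounded horizontal discrepancy. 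A secondary, bookkeeping obstacle is to keep track of when $H'$ should be $\pi^{-1}(W)=W\oplus Z$ and when it should be $W$ itself (the latter exactly when $W$ is isotropic), and to ensure that every group met by the induction still lies in the class $H_{2m+1}\times\bR^{j}$.
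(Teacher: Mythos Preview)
Your plan shares the essential ingredients with the paper --- project to $V$, invoke Schreiber, and exploit the commutator identity~\eqref{import_id} for the central direction --- but the organisation is genuinely different. The paper runs no induction and never enlarges the class of ambient groups. It applies the \emph{proof} of Theorem~\ref{main-thm} directly to $\Lambda_V=\pi_V(\Lambda)$ (observing that only unboundedness, not discreteness, is used there) to obtain the Schreiber subspace $L\subset V$, and then makes a single dichotomy: either $\omega(\Lambda_V,\Lambda_V)=0$, in which case $\Lambda$ sits inside the abelian subgroup $L'\times Z$ for a Lagrangian $L'$ and Schreiber finishes; or $\omega(\Lambda_V,\Lambda_V)\neq 0$, in which case Lemma~\ref{lem2} shows $[\Lambda,\Lambda]$ is relatively dense in $Z$ and $\Lambda_V$ is discrete, whence $H'=LZ$. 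Your argument for discreteness of $\pi(\Lambda)$ --- forcing $\omega(\pi g_k,\pi\lambda)=0$ from the uniform discreteness of $\Lambda^{6}$ --- is cleaner than the paper's compactness manoeuvre in Lemma~\ref{lem2} and could profitably replace it.

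The inductive scheme, however, introduces a real difficulty that the paper's direct case split avoids. When the Schreiber subspace $W$ is a proper subspace of $V$ you propose ``passing to $\pi^{-1}(W)$'', but $\Lambda$ is only contained in a tube around $\pi^{-1}(W)$, not in $\pi^{-1}(W)$ itself, so the induction hypothesis cannot be applied to $\Lambda$ as stated; you would need to manufacture an honest discrete approximate subgroup inside $\pi^{-1}(W)$ commensurable with $\Lambda$, and there is no group homomorphism retracting the tube onto $\pi^{-1}(W)$. The same issue makes the treatment of the Euclidean factor $\bR^{j}$ in the intermediate groups $H_{2m+1}\times\bR^{j}$ genuinely awkward: conjugation moves only the $Z$-coordinate, so relative density in the $\bR^{j}$-direction has to come from somewhere else, and ``carried along and ultimately dealt with by Theorem~\ref{main-thm}'' is not yet an argument. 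The paper never meets either problem because it never leaves $H_{2n+1}$ and never needs $\Lambda$ to live in a smaller subgroup --- it simply names $H'$ and checks relative density around it.
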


Let us denote by $\pi_V$ the projection from $H_{2n+1}$ onto $V$, i.e., $\pi_V((v,z)) = v$ for any $(v,z) \in H_{2n+1}$. The following example shows that we cannot improve the statement of the theorem.

\begin{eg}
Let $\Lambda = \{(m\sqrt{5} + n \sqrt{3},0), m) \, | \, m,n \in \bZ\}$. Then $\Lambda$ is a discrete subgroup in $H_3$. It is clear that $\pi_V(\Lambda)$ is dense within $L = \bR \times \{0\} \subset V$, and therefore it is non-discrete.
\end{eg}

%

It is easy to see that the projection on the $Z$-coordinate of a discrete approximate subgroup is not necessarily discrete. Indeed, we can find lattices in $H_3$ with a dense set of the $Z$-coordinates.

\begin{eg}
It is easy to see that 
\[
\Lambda = \left\{ \left((m,n), m\sqrt{5} + \frac{1}{2} \bZ\right) \, | \, m,n \in \bZ \right\}
\]
is a discrete co-compact subgroup in $H_3$ (equipped with the determinant on $\bR^2$ as the symplectic form). But the projection of $\Lambda$ on $Z$ is everywhere dense.
\end{eg} 

By the methods similar to the ones used to prove Theorem \ref{heis-thm}, we prove also the following claim.

\begin{prop}
\label{heis-prop}
Let $\Lambda \subset H_{2n+1}$ be a discrete approximate subgroup. If $\pi_V(\Lambda)$ is relatively dense in $V$, then $\Lambda$ is an approximate lattice, i.e., $\Lambda$ is relatively dense in $H_{2n+1}$.
\end{prop}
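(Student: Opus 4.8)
The plan is to deduce the proposition from Theorem~\ref{heis-thm} and then to pin down the subgroup it produces. Applying Theorem~\ref{heis-thm} to $\Lambda$ yields a connected non-trivial subgroup $H'\le H_{2n+1}$ and $R>0$ with $\Lambda\subset\bigcup_{h\in H'}B_R(h)$. By left-invariance $B_R(h)=h\,B_R(e_{H})$, and since $\pi_V$ is a continuous homomorphism onto the abelian group $V$ this gives $\pi_V(\Lambda)\subset\pi_V(H')+\pi_V(B_R(e_H))$ with $\pi_V(B_R(e_H))$ bounded in $V$. As $H'$ is connected, $W:=\pi_V(H')$ is a linear subspace of $V$; since $\pi_V(\Lambda)$ is relatively dense in $V$ and contained in a bounded neighbourhood of $W$, we must have $W=V$. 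So $H'$ is a connected subgroup of $H_{2n+1}$ that projects onto all of $V$.

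The next step is to show $H'=H_{2n+1}$. Passing to Lie algebras, $d\pi_V\colon\mathfrak h'\to V$ is surjective, so $\dim\mathfrak h'\ge 2n$. If $\dim\mathfrak h'=2n$ then $d\pi_V$ is an isomorphism, hence $\mathfrak h'\cap\mathfrak z=0$ and $\mathfrak h'$ is the graph of a linear map $\phi\colon V\to\mathfrak z$; but by \eqref{import_id} the bracket of two vectors of such a graph is the central vector $\omega(X,Y)$, which lies in the graph only if $\omega\equiv 0$, impossible by non-degeneracy of $\omega$. Therefore $\dim\mathfrak h'=2n+1$, i.e.\ $H'=H_{2n+1}$. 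Being relatively dense around $H_{2n+1}$ is, by Definition~\ref{rel-dense-def}, exactly being relatively dense in $H_{2n+1}$, so $\Lambda$ is an approximate lattice.

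The only genuinely new point beyond Theorem~\ref{heis-thm} is the exclusion of the $2n$-dimensional case, and that is the step I would be most careful with; everything else is bookkeeping with the metric and the definitions. If one prefers to avoid quoting Theorem~\ref{heis-thm}, the same conclusion can be reached by the methods of its proof: choose $v\in\pi_V(\Lambda)\setminus\{0\}$, so that $\omega(v,\cdot)$ is a non-zero functional mapping the relatively dense set $\pi_V(\Lambda)$ onto a relatively dense subset of $Z$; by \eqref{import_id} the commutators $[\lambda,\lambda']$ (whose $V$-coordinate vanishes), together with the fact that the fourfold product set of $\Lambda$ is covered by finitely many left-translates of $\Lambda$, produce points of $\Lambda$ whose $V$-coordinate ranges over a fixed finite set while their $Z$-coordinate sweeps out a relatively dense subset of $Z$; combined with the relative density of $\pi_V(\Lambda)$, a suitable left multiplication then fills $H_{2n+1}$. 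The delicate point of this second route is keeping the vertical twist $\frac{1}{2}\omega(\cdot,\cdot)$ under control, which is precisely why one must work with commutators, whose $V$-coordinate is $0$.
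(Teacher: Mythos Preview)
Your proposal is correct. The paper's own proof is shorter and goes directly through Lemma~\ref{lem2}: since $\Lambda_V$ is relatively dense in $V$ and $\omega$ is non-degenerate, one has $\omega(\Lambda_V,\Lambda_V)\neq 0$ (and $\dim L=2n\ge 1$), so Lemma~\ref{lem2} gives that $[\Lambda,\Lambda]$ is relatively dense in $Z$; combined with the relative density of $\Lambda_V$ in $V$, this yields relative density of $\Lambda$ in $H_{2n+1}$. Your primary route is different: you invoke Theorem~\ref{heis-thm} as a black box and then, via a Lie-algebra argument, pin down the resulting subgroup $H'$ as all of $H_{2n+1}$, observing that a connected $2n$-dimensional subgroup projecting onto $V$ would have Lie algebra given by the graph of a linear map $V\to\mathfrak z$, which cannot be closed under the bracket by non-degeneracy of $\omega$. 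This is valid and conceptually clean, though arguably heavier than necessary since Lemma~\ref{lem2} already isolates all the Heisenberg-specific content without passing through the full classification. Your second, alternative sketch---producing a relatively dense set of commutators in $Z$ via $\omega(v,\cdot)$ and then combining with relative density in $V$---is precisely the paper's argument.
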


The analog of Theorem \ref{complete-classification} is not possible in the Heisenberg group:

\begin{prop}
\label{ful-class-impossible}
Let $\Lambda = \{(m\sqrt{5} + n \sqrt{3},0), m) \, | \, m,n \in \bZ\}$. Then there is no approximate lattice (discrete relatively dense approximate subgroup) in $H_3$  which contains $\Lambda$.
\end{prop}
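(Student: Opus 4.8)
The plan is to argue by contradiction, using the commutator identity \eqref{import_id} as the driving mechanism. With $\Lambda=\{((m\sqrt5+n\sqrt3,0),m):m,n\in\bZ\}$ as in the statement, suppose $\Gamma\subset H_3$ were an approximate lattice with $\Lambda\subseteq\Gamma$. First I would record three facts about $\Gamma$: (i) being a discrete approximate subgroup it is uniformly discrete, with some separation constant $\delta>0$; (ii) being a symmetric approximate subgroup containing $e$, the product $\Gamma\cdot\Gamma\cdot\Gamma\cdot\Gamma$ is contained in a finite union $\bigcup_{f\in F'}f\Gamma$ of left-translates of $\Gamma$ (iterate $\Gamma^{-1}\Gamma\subseteq F\Gamma$); (iii) since $\Gamma$ is relatively dense in $H_3$ and closed balls of $H_3$ have bounded image under $\pi_V$ while left translations shift $\pi_V$ by a vector, the set $\pi_V(\Gamma)$ is relatively dense, hence unbounded, in $V\cong\bR^2$; in particular I may fix some $\gamma=((a,b),c)\in\Gamma$ with $b\neq0$.

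Next I would compute the commutators of $\gamma$ against $\Lambda$. For $\lambda=((m\sqrt5+n\sqrt3,0),m)\in\Lambda$, the identity \eqref{import_id} (with $\omega$ the determinant form on $\bR^2$, or indeed any symplectic form) gives
\[
[\lambda,\gamma]\;=\;\bigl(0,\;\omega\bigl((m\sqrt5+n\sqrt3,0),(a,b)\bigr)\bigr)\;=\;\bigl(0,\;(m\sqrt5+n\sqrt3)\,b\bigr)\in Z .
\]
Because $\sqrt5$ and $\sqrt3$ are linearly independent over $\bQ$, the additive group $\bZ\sqrt5+\bZ\sqrt3$ is dense in $\bR$, so, with $b\neq0$ fixed, the set $S:=\{[\lambda,\gamma]:\lambda\in\Lambda\}$ is a dense subset of the line $Z\cong\bR$. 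On the other hand $\lambda$ and $\gamma$ lie in $\Gamma$, which is symmetric, so $[\lambda,\gamma]\in\Gamma\cdot\Gamma\cdot\Gamma\cdot\Gamma\subseteq\bigcup_{f\in F'}f\Gamma$; hence $S\subseteq Z\cap\bigcup_{f\in F'}f\Gamma=\bigcup_{f\in F'}(Z\cap f\Gamma)$.

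Finally I would derive the contradiction by a counting estimate. Each set $Z\cap f\Gamma$ equals $f\bigl(\Gamma\cap f^{-1}Z\bigr)$, an isometric (left-translated) copy of a subset of $\Gamma$, so it is a $\delta$-separated subset of the line $Z$; consequently the finite union $\bigcup_{f\in F'}(Z\cap f\Gamma)$ meets every interval of length $1$ in $Z$ in at most $|F'|(\lfloor1/\delta\rfloor+1)$ points. A dense subset of $\bR$ cannot have this property, so $S$ cannot be contained in it, a contradiction, which proves that no approximate lattice in $H_3$ contains $\Lambda$. The points requiring care are fact (iii) (which rests only on the ambient left-invariant metric having bounded closed balls, as implicit in the paper's setup) together with the observation used at the end: a finite union of uniformly discrete sets need not itself be uniformly discrete, so one must estimate translate-by-translate rather than treating $\bigcup_{f\in F'}f\Gamma$ as a single uniformly discrete set; this is really the crux of the argument.
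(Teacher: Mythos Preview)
Your argument is correct. Both your proof and the paper's hinge on the commutator identity \eqref{import_id} together with the density of $\bZ\sqrt5+\bZ\sqrt3$ in $\bR$, but the contradictions are placed in different coordinates. The paper's proof is a two-line application of Lemma~\ref{lem2}: since any approximate lattice $\Lambda'\supset\Lambda$ has $\Lambda'_V$ relatively dense in $V$, Lemma~\ref{lem2} forces $\Lambda'_V$ to be discrete, whereas $\Lambda'_V$ contains $\Lambda_V=\{(m\sqrt5+n\sqrt3,0)\}$, which is dense on a line. You instead bypass Lemma~\ref{lem2} and work directly in the centre: commuting $\Lambda$ against a single $\gamma\in\Gamma$ with $b\neq0$ produces a dense subset of $Z$ inside $\Gamma^4\subset F'\Gamma$, and you finish by noting that each $f\Gamma\cap Z$ is uniformly discrete, so the finite union is locally finite. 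Your route is more self-contained for this particular example; the paper's is shorter because the work has already been done in Lemma~\ref{lem2}.

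One small point of presentation: the bound $|F'|(\lfloor1/\delta\rfloor+1)$ tacitly treats $\delta$ as Euclidean separation on $Z$, whereas $\delta$ is the separation in the ambient left-invariant metric $d_H$. Since the paper only stipulates that the $d_H$-topology agrees with the standard one, a $d_H$-ball need not be a Euclidean ball. This does not affect the argument, because all you actually need is that each $Z\cap f\Gamma$ is locally finite (closed discrete in the common topology), hence finite on compact intervals; the explicit count can simply be dropped.
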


\section{\textbf{Discrete approximate subgroups in $\bR^d$}}

Let $\Lambda \subset \bR^d$ be an approximate subgroup. By translating $\Lambda$ if necessary, we can assume that $0_{\bR^d} \in \Lambda$. 
Denote by $K = diam(F)$. Since, for any two $\ell_1,\ell_2 \in \Lambda$ we have $\ell_1 - \ell_2 \in \Lambda + F$, this implies that there exists $\ell \in \Lambda$ with $\ell_1 - \ell_2 \in B_K(\ell)$. If, we take $\ell_1 = 0$, we obtain the following property of $\Lambda$:

\begin{center}
(\textbf{A}) for every $\ell \in \Lambda$ there exists $\ell' \in B_{K}(-\ell) \cap \Lambda$.
\end{center}

By use of the property (A), if we take $ \ell_2 \in \Lambda$, then there exists $\ell_3 \in \Lambda$ such that $\ell_3 \in B_K(-\ell_2)$. Also for every $\ell_1 \in \Lambda$,there exists $\ell_4 \in \Lambda$ such that  $\ell_1 - \ell_3 \in B_K(\ell_4)$. Finally, this implies that $\ell_1 + \ell_2 \in B_{2K}(\ell_4)$. Thus, the following property holds for any approximate subgroup $\Lambda$ containing the neutral element:
\begin{center}
(\textbf{B}) for any $\ell_1, \ell_2 \in \Lambda$ there exists $\ell' \in B_{2K}(\ell_1 + \ell_2) \cap \Lambda$.
\end{center} 

We will call the property (A) the almost symmetry, and (B) the almost doubling.
We start with an easy observation which proves Theorem \ref{main-thm} in the case $d=1$.

\begin{prop}\label{simple-prop} 
Let $\Lambda \subset \bR$ be an infinite discrete approximate subgroup. Then $\Lambda$ is relatively dense.
\end{prop}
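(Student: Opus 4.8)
The plan is to bound directly the distances between consecutive points of $\Lambda$. First I would normalise so that $0\in\Lambda$: translating $\Lambda$ by $-\lambda_0$ for some $\lambda_0\in\Lambda$ preserves being an infinite discrete approximate subgroup (with $F$ replaced by $F+\lambda_0$) and preserves being relatively dense, so there is no loss. Set $K=diam(F)$. I will use only two facts already recorded above: $\Lambda$ is uniformly discrete, and for any $\ell_1,\ell_2\in\Lambda$ there is $\ell\in\Lambda$ with $\ell_1-\ell_2\in B_K(\ell)$.

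Next I would pin down the global shape of $\Lambda$. Being infinite and uniformly discrete, $\Lambda$ is unbounded; in fact it is unbounded both above and below, since if $\Lambda$ were, say, bounded below then $-\Lambda=0-\Lambda\subseteq\Lambda-\Lambda\subseteq F+\Lambda$ would be bounded below as well, contradicting that $-\Lambda$ is unbounded above. Hence $\Lambda$ can be written as a strictly increasing bi-infinite sequence $(\lambda_n)_{n\in\mathbb{Z}}$ with $\lambda_n\to\pm\infty$, so that each gap $(\lambda_n,\lambda_{n+1})$ is meaningful and of finite length.

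The crux is then the following claim: fix any $s\in\Lambda$ with $|s|>K$ (possible since $\Lambda$ is unbounded); then every gap has length at most $|s|+K$. I would argue by contradiction: suppose $\lambda_{n+1}-\lambda_n>|s|+K$, and write $\alpha=\lambda_n$, $\beta=\lambda_{n+1}$, so that $(\alpha,\beta)\cap\Lambda=\emptyset$. If $s>0$, apply the difference fact to $\beta-s\in\Lambda-\Lambda$: there is $\ell\in\Lambda$ with $|\ell-(\beta-s)|\le K$, whence $\ell\le\beta-s+K<\beta$ (as $s>K$) and $\ell\ge\beta-s-K>\alpha$ (as $\beta-\alpha>s+K$), so $\ell\in(\alpha,\beta)\cap\Lambda$, a contradiction; if $s<0$ the same works with $\alpha-s$ in place of $\beta-s$. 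Once the claim is proved, every $x\in\mathbb{R}$ lies in some $[\lambda_n,\lambda_{n+1}]$ and hence within $(|s|+K)/2$ of $\Lambda$, so $\Lambda$ is relatively dense.

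As for difficulty, I expect no genuine obstacle, but it is worth naming the approach to avoid: the obvious move is to use almost-doubling (property (B)) to force the multiples $2\ell,3\ell,\dots$ of a single point of $\Lambda$ to lie near $\Lambda$, and this fails, because the error accumulates linearly in the number of summands and therefore never beats the spacing one is trying to bound. The argument above sidesteps this entirely: the difference relation is invoked only once, on the single fixed element $s$ subtracted from a point lying on the far side of a hypothetical long gap, so no error accumulates. The only point requiring a little care is establishing the two-sided unboundedness of $\Lambda$ up front, since that is what makes consecutive points — and hence gaps — well defined.
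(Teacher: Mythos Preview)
Your argument is correct. There is a small slip of the pen in the unboundedness paragraph: if $\Lambda$ is bounded below then $\Lambda$ is unbounded \emph{above}, hence $-\Lambda$ is unbounded \emph{below}, and this is what clashes with $-\Lambda\subset F+\Lambda$ being bounded below. With ``above'' corrected to ``below'' in that clause, the proof goes through as written.

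Your route is genuinely different from the paper's. The paper argues constructively: it fixes $\ell\in\Lambda$ with $\ell>3K$ and uses property~(B) repeatedly, taking $\ell_{n+1}\in B_{2K}(\ell_n+\ell)\cap\Lambda$, to produce an increasing sequence in $\Lambda$ whose consecutive gaps lie in $[\ell-2K,\ell+2K]$; almost symmetry then handles the negative half-line. You instead argue by contradiction on a single hypothetical long gap and invoke the difference relation exactly once, which is slicker and avoids building any sequence. One correction to your closing remark, however: the iterated use of property~(B) does \emph{not} accumulate error in the way you describe, because at each step one applies~(B) to the pair $(\ell_n,\ell)$ with $\ell_n$ already in $\Lambda$, not to $n\ell$; the gap $\ell_{n+1}-\ell_n$ stays in $[\ell-2K,\ell+2K]$ uniformly in $n$. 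So the approach you warn against is in fact the paper's approach, and it works fine --- your method is simply a cleaner alternative, not a necessary repair.
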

\begin{proof}
Assume that $\Lambda \subset \bR$ is an infinite approximate subgroup. Take $\ell \in \Lambda$ with $\ell > 3K$ (which exists by uniform discreteness of $\Lambda$). By the almost doubling property there exists $\ell_2 \in \Lambda$ with $\ell_2 \in [2\ell-2K, 2 \ell + 2K] \subset [\ell + K, 2 \ell+2K]$. Similarly, there exists $\ell_3 \in \Lambda \cap B_{2K}(\ell_2 + \ell)$. Therefore, $\ell_3 \in [\ell_2+K, \ell_2 + \ell + 2K]$. Assume that we already constructed $\ell_1 = \ell, \ell_2,\ldots,\ell_{n} \in \Lambda$ satisfying that $\ell_m + K \leq \ell_{m+1} \leq \ell_m + \ell + 2K$ for $m=1,\ldots,n-1 $. Then there exists $\ell_{n+1} \in \Lambda \cap [\ell_n + \ell - 2K, \ell_n + \ell + 2K]$. Therefore, we constructed an increasing sequence in $\Lambda \cap \bR_{+}$ with bounded gaps. By almost symmetry property of $\Lambda$, we also have in $\Lambda$ the elements $\{-\ell',-\ell_2',\ldots,-\ell_n',\ldots\}$ with $\ell' \in B_{K}(-\ell)$. This finishes the proof of the Proposition. 
\end{proof} 

A higher-dimensional case is much more subtle. 
An important role in the proof of Theorem \ref{main-thm} will play the set of asymptotic directions of the points in $\Lambda$. 
\begin{mydef}
Let $\Lambda \subset \bR^d$ be a uniformly discrete infinite set. We call 
\[
 D(\Lambda) = \{ u \in S^{d-1} \, | \, \mbox{ there exists } (\ell_n) \in \Lambda \mbox{ with } \frac{\ell_n}{\| \ell_n \|} \to u \mbox{ and } \Vert \ell_n \Vert \to \infty\}
\] 
the \textit{set of asymptotic directions} of $\Lambda$.
\end{mydef}

It is easy to see that $D(\Lambda)$ is non-empty closed set. It will be very convenient to us to introduce the subspace generated by $D(\Lambda)$. Let $L \subset \bR^d$ be the smallest linear subspace with the property that $D(\Lambda) \subset L$. In other words, we have
\[
L = Span(D(\Lambda)).
\]
  
The next lemma is an important ingredient in the proof of  Theorem \ref{main-thm}. 

\begin{lemma}\label{lemma}
Assume that $\Lambda$ is an infinite discrete approximate subgroup. Let $L = Span(D(\Lambda))$ be a proper subspace in $\bR^d$. Then there exists $R > 0$ ($R = 3\cdot diam(F)$) such that  
\[
\Lambda \subset \bigcup_{x \in L} B_R(x).
\]
\end{lemma}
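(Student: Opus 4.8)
The plan is to prove the statement by contradiction: if some point of $\Lambda$ lies at distance more than $R := 3\,diam(F)$ from $L$, I will manufacture an asymptotic direction of $\Lambda$ that is transverse to $L$, contradicting the very definition $L = Span(D(\Lambda))$. Write $K = diam(F)$ and let $P$ denote the orthogonal projection of $\bR^d$ onto $L^{\perp}$, so that $dist(v,L) = \|Pv\|$ for every $v$. Suppose, for contradiction, that there is $\ell_0 \in \Lambda$ with $\|P\ell_0\| = dist(\ell_0,L) > R$, and set $w := P\ell_0$, so that $\|w\| > R > 2K$.

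The main step is to iterate the almost-doubling property (B) so as to ``amplify'' $\ell_0$. Put $\ell^{(1)} := \ell_0$ and, given $\ell^{(n)} \in \Lambda$, use property (B) with $\ell_1 = \ell^{(n)}$ and $\ell_2 = \ell_0$ to choose $\ell^{(n+1)} \in B_{2K}(\ell^{(n)} + \ell_0) \cap \Lambda$. An immediate induction yields $\ell^{(n)} = n\ell_0 + \sum_{j=1}^{n-1} e_j$ with error terms $e_j = \ell^{(j+1)} - \ell^{(j)} - \ell_0$ satisfying $\|e_j\| \le 2K$, whence
\[
\|P\ell^{(n)}\| \ \ge\ n\|w\| - 2K(n-1), \qquad \|\ell^{(n)}\| \ \le\ n\|\ell_0\| + 2K(n-1).
\]
Since $\|w\| > 2K$, the first estimate forces $\|P\ell^{(n)}\| \to \infty$, hence $\|\ell^{(n)}\| \to \infty$; and combining the two estimates,
\[
\frac{\|P\ell^{(n)}\|}{\|\ell^{(n)}\|} \ \ge\ \frac{n\|w\| - 2K(n-1)}{n\|\ell_0\| + 2K(n-1)} \ \longrightarrow\ \frac{\|w\| - 2K}{\|\ell_0\| + 2K} \ =:\ c \ >\ 0 \qquad (n \to \infty).
\]
In particular, for all sufficiently large $n$, the unit vectors $u_n := \ell^{(n)}/\|\ell^{(n)}\|$ satisfy $\|Pu_n\| \ge c/2$.

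To conclude, I would extract a limiting direction. By compactness of $S^{d-1}$ some subsequence $u_{n_k}$ converges to a unit vector $u$; since $\|\ell^{(n_k)}\| \to \infty$, the sequence $(\ell^{(n_k)})$ witnesses $u \in D(\Lambda)$. On the other hand, $P$ is continuous, so $\|Pu\| = \lim_k \|Pu_{n_k}\| \ge c/2 > 0$, i.e. $u \notin L$. This contradicts $D(\Lambda) \subseteq Span(D(\Lambda)) = L$. Hence every $\ell \in \Lambda$ satisfies $dist(\ell,L) \le R$, which is exactly the assertion $\Lambda \subseteq \bigcup_{x \in L} B_R(x)$ with $R = 3\,diam(F)$.

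The one point requiring care — and the reason the bound has to be a multiple of $diam(F)$ — is the quantitative bookkeeping in the iteration: the accumulated error from repeatedly applying (B) grows linearly in $n$ at rate $2K$, so to be sure the transverse part $nw$ of $\ell^{(n)}$ genuinely outruns that error one needs $\|w\| = dist(\ell_0,L)$ to exceed $2K$ from the outset, and the threshold $R$ is precisely what guarantees this. Everything else (the induction for $\ell^{(n)}$, the two norm estimates, the extraction of the limiting direction, continuity of $P$) is routine. It is also worth noting that the argument only uses the inclusion $D(\Lambda) \subseteq L$, not the minimality of $L$, so that the properness of $L$ plays no role beyond making the conclusion non-vacuous.
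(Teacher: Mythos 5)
Your proof is correct and follows essentially the same strategy as the paper's: assume a point of $\Lambda$ lies farther than $3\,diam(F)$ from $L$, iterate the almost-doubling property (B) with that point to produce a sequence in $\Lambda$ escaping to infinity, and extract an asymptotic direction outside $L$, contradicting $D(\Lambda)\subset L$. The only difference is cosmetic bookkeeping: you control the component orthogonal to $L$ via the projection $P$, whereas the paper keeps the sequence inside a cone $V_{\eps}(u)$ disjoint from $L$ away from the origin.
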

\begin{proof}
Let $\Lambda \in \bR^d$ be an infinite discrete approximate subgroup, i.e., there exists a finite set $F \subset \bR^d$ with $\Lambda - \Lambda \subset \Lambda + F$. Denote by $K = diam(F)$.
For any $\eps > 0$ and any $u \in S^{d-1}$ we define the cone 
\[
V_{\eps}(u) = \{ t v \, | \, t > 0, v \in S^{d-1} \mbox{ with } \langle v, u \rangle \ge 1 - \eps \}.
\]
Let us take $R = 3K$. We claim that 
\[
\Lambda \subset \bigcup_{x \in L} B_R(x).
\]
Indeed, if there exists $\ell \in \Lambda$ such that $\ell \not \in \bigcup_{x \in L} B_R(x)$, let us define $u = \frac{\ell}{\| \ell \|}$ and $1 - \eps = \frac{\sqrt{\| \ell \|^2 - 5K^2}}{\| \ell \|}$. Then we construct a sequence $\ell_1, \ell_2,\ell_3,\ldots$ in $\Lambda$ with $\ell_n \to \infty$ and $\ell_n \in V_{\eps}(u)$. Since, clearly, we have
\[
V_{\eps}(u) \cap L = \{ 0_{\bR^d} \},
\] 
this will imply the contradiction. 
\medskip

The construction is the same as in the proof of Proposition \ref{simple-prop}. Let us define $\ell_1 = \ell$. We find $\ell_2 \in B_{2K}(\ell_1 + \ell) \cap \Lambda$. The following calculation guarantees that $\ell_2 \in V_{\eps}(u)$:
\[
\left\langle \frac{\ell_2}{\| \ell_2 \|}, u \right\rangle \ge \frac{2 \| \ell \|}{\sqrt{4\| \ell \|^2+ 4 K^2}} = \frac{\| \ell \|}{ \sqrt{\|\ell \|^2 + K^2}} \ge 1 - \eps.
\]
Also, it is clear that $\|\ell_2 \| \ge \| \ell_1 \| +K$. Assume that we constructed a finite sequence $\ell_1,\ell_2, \ldots, \ell_n \in \Lambda$ with $\| \ell_{m+1} \| \ge \| \ell_m \| + K$, $m=1,\ldots,n-1$, and $\ell_1,\ell_2,\ldots,\ell_n \in V_{\eps}(u)$. Then there exists $\ell_{n+1} \in B_{2K}(\ell_n + \ell) \cap \Lambda$. Clearly, we have 
\[
\| \ell_{n+1} \| \ge \| \ell_n \| + K.
\] 
Finally, for any vector $v \in V_{\eps}(u)$ we have
\[
B_{2K}(v + \ell) \subset V_{\eps}(u).
\]
This will guarantee that $\ell_{n+1} \in V_{\eps}(u)$. Indeed, if a vector $v \in V_{\eps}(u)$, 
then $v+ V_{\eps}(u) \subset V_{\eps}(u)$, and therefore we have:
\[
dist(v+\ell, \partial V_{\eps}(u)) \ge dist(v + \ell, \partial(v + V_{\eps}(u)))
\]
\[
 = dist(\ell, \partial(V_{\eps}(u))) = \| \ell \| (1-\eps) 
= \sqrt{\| \ell \|^2 - 5K^2} > 2K. 
\]
\end{proof}

Our next step in the proof of Theorem \ref{main-thm} is to construct a system of ``basis" vectors for $\Lambda$. Let $L = Span(D(\Lambda))$, and let $R$ satisfy
\begin{equation}\label{R-thick}
\Lambda \subset \bigcup_{x \in L} B_{R}(x).
\end{equation}
Assume that $dim(L) = k$, where $1 \le k \le d$, and denote by $K = diam(F)$. By the definition of the set of asymptotic directions $D(\Lambda)$, there exists $\eps > 0$ such that for every $M > 0$ there exist $k$ elements $\ell_1,\ldots,\ell_k \in \Lambda$ satisfying the following properties:
\medskip

\begin{itemize}
\item{(\textbf{$\eps$-well spreadness})}
For all $1 \le i \le k$, any $v_i \in B_{2K}(\ell_i)$, and $v_j \in B_{2K}(\varepsilon_j\ell_j), j \neq i, \varepsilon_j \in \{-1,1\}$, let us denote by $\gamma_i$ the angle between $v_i$ and the subspace 
\[
V^i = Span\{v_1,\ldots,v_{i-1},v_{i+1},\ldots,v_k\}.
\]
 Then we require:
\[
\eps \le \gamma_i \le \pi - \eps,
\]\\
\item{(\textbf{no short vectors})} For every $1 \le i \le k$ we have 
\[
\| \ell_i \| \ge M.
\]
\end{itemize}
By almost symmetry of $\Lambda$, we can also find the ``reflected" vectors $\{\ell_1',\ldots,\ell_k'\} \subset \Lambda$ which satisfy the property
\[
\ell_i' \in B_K(-\ell_i),\mbox{ } i=1,\ldots,k.
\]
 Let us denote by $\cF = \{\ell_1,\ldots,\ell_k, \ell_1',\ldots,\ell_k'\}$. By Lemma \ref{lemma} there exists $R > 0$ such that $\Lambda \subset L_R$, where    
$L_R = \bigcup_{x \in L} B_R(x)$ is the $R$-thickening of the subspace $L$. Let us assume that $R \ge K$.
Finally, for any choice of $M > 0$, let us call the corresponding system $\cF$ the $(M,\eps,L,R)$-system in $\bR^d$, and denote by $T(\cF) = \max\{ \| \ell_i \| \, | \, i=1,\ldots,k \}$.

Our next claim is the following.

\begin{prop}\label{prop} 
Let $\eps > 0$.
 There exist $\delta = \delta(\eps) > 0$ and $M_0$ such that if 
$\cF$ is a $(M,\eps,L,R)$-system for  a subspace   $L$ of $\bR^d$, $M \ge M_0$ and  $R \ge K$, then for all $x \in L_R$ with $\| x \|$ large enough  there exists $\ell \in \cF$ such that for every $v \in B_K(\ell)$
we have
\[
\| x - v\| \le \| x \| - \frac{\delta M}{4}.
\]
\end{prop}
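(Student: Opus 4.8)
\noindent\emph{Proof plan.}

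The plan is to reduce the proposition to a purely \emph{directional} statement, then close with a short estimate. Concretely, I claim it is enough to find, for every $x \in L_R$ with $\|x\|$ large, an element $\ell \in \cF$ with
\[
\langle x, \ell \rangle \ \ge\ \delta\,M\,\|x\|, \qquad \delta = \delta(\eps) > 0.
\]
Granting this, take an arbitrary $v \in B_K(\ell)$ and write $v = \ell + e$ with $\|e\| \le K$; then
\[
\|x - v\|^2 = \|x\|^2 - 2\langle x, \ell \rangle - 2\langle x, e \rangle + \|\ell + e\|^2 \ \le\ \|x\|^2 - 2\delta M\|x\| + 2K\|x\| + \big(T(\cF) + 2K\big)^2 .
\]
As soon as $M \ge M_0 \ge 4K/\delta$ we have $2K \le \tfrac12 \delta M$, and once $\|x\|$ is large enough in terms of $\cF$ (so that $(T(\cF)+2K)^2 \le \tfrac12 \delta M \|x\|$) the right-hand side is at most $\|x\|^2 - \delta M\|x\| \le (\|x\| - \tfrac12 \delta M)^2$. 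Hence $\|x - v\| \le \|x\| - \tfrac12 \delta M \le \|x\| - \tfrac14 \delta M$, uniformly in $v \in B_K(\ell)$, which is the assertion of the proposition.

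The directional claim rests on an elementary fact about well-spread frames. Suppose $u_1, \dots, u_k \in S^{d-1}$ satisfy $\angle\big(u_i,\, Span\{u_j : j \ne i\}\big) \ge \eps$ for every $i$. Then the $u_i$ are linearly independent, and every unit vector $u$ of $W := Span\{u_1, \dots, u_k\}$ satisfies $\max_i |\langle u, u_i \rangle| \ge \tfrac{\sin\eps}{k}$. Indeed, writing $u = \sum_i c_i u_i$, the vector $c_i u_i - u$ lies in $Span\{u_j : j \ne i\}$, so scaling the estimate $dist\big(u_i, Span\{u_j : j \ne i\}\big) \ge \sin\eps$ gives $|c_i|\sin\eps \le dist\big(u, Span\{u_j : j \ne i\}\big) \le \|u\| = 1$; thus $\sum_i |c_i| \le k/\sin\eps$, and then $1 = \|u\|^2 = \sum_i c_i \langle u, u_i\rangle \le \big(\max_i |\langle u, u_i\rangle|\big)\sum_i |c_i|$.

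I would now apply this to the normalised vectors $u_i := \ell_i/\|\ell_i\|$. Taking $v_i = \ell_i$ for every $i$ in the $\eps$-well-spreadness condition gives precisely the hypothesis above, so $\dim W = k$. Since $\Lambda \subset L_R$ and $\|\ell_i\| \ge M$, each $u_i$ lies within $R/M$ of $L$; inserting this into the coefficient bound $\sum_i |c_i| \le k/\sin\eps$ shows that $W$ lies within $O\!\big(kR/(M\sin\eps)\big)$ of $L$ (say, in operator norm of the orthogonal projections onto the two $k$-planes), while for $x \in L_R$ the point $x/\|x\|$ lies within $O(R/\|x\|)$ of $L$. Hence, once $M \ge M_0$ and $\|x\|$ is large, $x/\|x\|$ is within $\tfrac12\cdot\tfrac{\sin\eps}{k}$ of some unit vector $u \in W$, and the fact above produces an index $i$ with $\big|\langle x/\|x\|,\, u_i\rangle\big| \ge \tfrac{\sin\eps}{2k}$. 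If $\langle x, \ell_i\rangle > 0$ we take $\ell := \ell_i \in \cF$ and obtain $\langle x, \ell\rangle \ge \tfrac{\sin\eps}{2k}\|\ell_i\|\,\|x\| \ge \tfrac{\sin\eps}{2k}\,M\,\|x\|$; otherwise we take the reflected vector $\ell := \ell_i' \in \cF$, and from $\ell_i' \in B_K(-\ell_i)$ we get $\langle x, \ell_i'\rangle \ge \tfrac{\sin\eps}{2k}\|\ell_i\|\,\|x\| - K\|x\| \ge \tfrac{\sin\eps}{4k}\,M\,\|x\|$ provided $M_0 \ge 4dK/\sin\eps$. In either case the directional claim holds with $\delta := \tfrac{\sin\eps}{4d}$, which depends on $\eps$ (and the ambient dimension) only; choosing $M_0$ large enough in terms of $\eps, R, K, d$ accommodates all the inequalities used above.

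I expect the only genuinely delicate point to be that the ``basis'' vectors $\ell_i$ are known to sit merely in the thickening $L_R$, not in $L$ itself, so one cannot argue inside $L$ directly. The purpose of the lower bound $M \ge M_0$ is exactly to make the directions $\ell_i/\|\ell_i\|$ — hence their span $W$ — as close to $L$ as the argument requires, and the purpose of the parameter $\eps$ in the definition of a system is to keep $W$ uniformly non-degenerate, so that the separation $\delta$ above depends on $\eps$ alone. Everything else amounts to bookkeeping the nested quantifiers ``for all $M \ge M_0$, then for all sufficiently large $\|x\|$'': one has to verify that $\delta$ and $M_0$ depend only on $\eps, R, K, d$, whereas the threshold on $\|x\|$ is allowed to depend on the particular system $\cF$ through $T(\cF)$.
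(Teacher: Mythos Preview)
Your argument is correct and follows the same overall strategy as the paper's proof: reduce to an inner-product lower bound $\langle x,\ell\rangle \ge \delta M\|x\|$ for some $\ell\in\cF$, deduce this from the $\eps$-well-spreadness of the $\ell_i$, and then absorb the three perturbations (the $K$-ball around $\ell$, the fact that $\cF\not\subset L$, and the near-symmetry $\ell_i'\in B_K(-\ell_i)$) into the constants.

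The differences are in execution, not in structure. First, the paper obtains $\delta$ by a soft compactness argument on the set of $\eps$-well-spread $k$-tuples of unit vectors, whereas you produce the explicit value $\delta=\sin\eps/(4d)$ via the coefficient bound $\sum_i|c_i|\le k/\sin\eps$; your route is more elementary and gives an effective constant. Second, to cope with $\cF$ lying only in the thickening $L_R$ rather than in $L$, the paper first replaces $\cF$ by an idealized symmetric system $\cF'\subset L$ with $\|\ell-\ell'\|\le R+K$, proves the clean estimate $\|x'\|-\|x'-\ell'\|\ge\delta M/3$ for $x'\in L$, and only then closes the gap with triangle inequalities; you instead argue directly that the $k$-plane $W=Span(\ell_i/\|\ell_i\|)$ is $O\!\big(kR/(M\sin\eps)\big)$-close to $L$, and use $\dim W=\dim L$ to transfer the closeness back so that $x/\|x\|$ is near a unit vector of $W$. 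Both mechanisms do the same job; yours keeps the original vectors throughout, while the paper's makes the inner-product step cleaner at the cost of one extra approximation layer.
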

\begin{proof}
Let us first assume on the system $\cF$ the following:
\begin{itemize}
\item $\cF$ is symmetric, i.e., if $\ell \in \cF$ then $-\ell \in \cF$\\ 
\item $\cF \subset L.$
\end{itemize}
We will also assume that  $x \in L$ and $v \in \cF$.

Our next step is to observe that there exists $\delta = \delta(\eps) > 0$ such that for any $z \in S(L) = \{ x \in L \, | \, \| x \| = 1 \}$ there exists $v \in \cF$ with $| \langle z, v \rangle | \ge \delta \| v \|$. Indeed, we can assume that all the $\ell_i \in \cF$ are of length one. Denote by $S'$ the set of $k-$tuples $\{\ell_1,\ldots,\ell_k\}$ in $S(\bR^d)$ which are $\eps$-well spread. Since it is a closed condition, the set $S'$ is closed. By compactness of 
\[
U = \{ (z,\ell_1,\ldots,\ell_k) \, | \, z \in Span(\ell_1,\ldots,\ell_k), \| z \| = 1, (\ell_1,\ldots,\ell_k) \in S' \}
\]
 it follows that there exist $(\ell_1',\ldots,\ell_k') \in S'$,  $z_0 \in Span(\ell_1',\ldots,\ell_k')$ with $\| z_0 \| = 1$,  and $1 \le i_0 \le k$ such that 
\[
\min_{\{z,\ell_1,\ldots,\ell_k\} \in U} \max_{1 \le i \le k} | \langle z, \ell_i \rangle | =  \max_{1 \le i \le k} | \langle z_0, \ell_i' \rangle | = | \langle z_0,\ell_{i_0}' \rangle| . 
\]
Obviously, the right hand side is positive, since otherwise, we will have that $z_0 \not \in Span(\ell_1',\ldots,\ell_k')$. Then we define $\delta= | \langle z_0,\ell_{i_0}' \rangle |$.

Let $x \in L$ and let us consider the triangle with the vertices at the origin, $x$ and at $v \in \cF$ with\footnote{Since $\cF$ is symmetric, such $v$ exists.}  $\langle x , v \rangle \ge \delta \| x \| \| v \|$.
Denote by $D = \| v \|$. Notice that $D \le T(\cF)$. We have
\[
\|x-v\|^2 = \|x\|^2 + D^2 - 2 \langle x, v \rangle.
\]
Assume that $\| x \|$ satisfies:
\[
2 \delta \| x \| - (T(\cF))^2 \ge \delta \|x \|,
\]
and
\[
\| x \| \ge T(\cF).
\]
Then we have
\[
 \|x\| - \|x-v\| = \frac{2 \langle x, v \rangle - D^2}{\| x \| + \| x- v \|} \ge \frac{\delta \| x \| D}{3 \| x \|} \ge \frac{\delta M}{3}.
 \]
 
For a general $(M,\eps,L,R)$-system $\cF$ we can find a symmetric $(M,\eps/2,L,R)$-system $\cF'$ with $\cF' \subset L$, such that for every $\ell' \in \cF'$ there exists $\ell \in \cF$ with $\| \ell - \ell' \| \le R+K$. Take $x \in L_R$ with $\| x \|$ large. Then there exists $x' \in L$ such that $\|x - x'\| \le R$. By the previous discussion, there exists $\delta = \delta(\eps/2)$ such that for any $x' \in L$ there exists $\ell' \in \cF'$ with
\[
\| x' \| - \| x' - \ell'\| \ge \frac{\delta M}{3}.
\]
Take $\ell \in \cF$ such that $\|\ell - \ell' \| \le R+K$. Then for every $v \in B_K(\ell)$ we have
\[
\|x\| - \| x - v\| \ge (\|x' \| - \| x' - x \|) - \left( \| x - x' \| + \| x' - \ell'\| + \| \ell' - \ell \| + \| \ell - v \| \right)  
\]
\[
\ge \left( \| x' \| - \| x' - \ell' \| \right) - (2 \| x - x' \| + \| \ell' - \ell \| + \| \ell - v \|) \ge \frac{\delta M}{3} - 3R - 2K \ge \frac{\delta M}{4},
\]
where the last transition is correct if $M$ is large enough\footnote{Here we use that $\delta$ is independent of $K,R$ and $M$.}.
\end{proof}
\medskip

\subsection{\textbf{Proof of Theorem \ref{main-thm}}}
 Assume that $\Lambda \subset \bR^d$ is an infinite discrete approximate subgroup satisfying $\Lambda - \Lambda \subset \Lambda + F$ for a finite set $F$. Denote by $K =diam(F)$ and by $L = Span(D(\Lambda))$. Then by Lemma \ref{lemma} there exists $R > 0$ such that $\Lambda \subset L_R = \bigcup_{x \in L} B_R(x)$. By the discussion above, there exists $\eps> 0$ such that for an arbitrary $M > 0$ there exists $(M,\eps,L,R)$-system $\cF$ within $\Lambda$.  
Let us take $M > 0$ so large that the claim of Proposition \ref{prop} holds true for some $\delta = \delta(\eps) > 0$. Let $R'$ be such that 
for every $x \in L_R$ with $\| x \| \ge R'$ there exists $\ell \in \cF$ with the property that for every $v \in B_{K}(\ell)$  we have: 
\[
\|x - v\| \le \| x \| - \frac{\delta M}{4}. 
\]
We will show that for every $z \in L_R$ we will have $B_{R'}(z) \cap \Lambda \neq \emptyset$. Assume, on the contrary, that there exists $z \in L_R$ such that  $B_{R'}(z) \cap \Lambda = \emptyset$. Take minimal $R_2 > R'$ such that $B_{R_2}(z) \cap \Lambda \neq \emptyset$. This means that for every $r < R_2$ we have $B_{r}(z) \cap \Lambda = \emptyset$, and that there exists $y \in B_{R_2}(z) \cap \Lambda$. 

Let us denote by $x = z-y$. Then $\| x \| = R_2$, and therefore there exists $\ell \in \cF \subset \Lambda$ such that for every $v \in B_K(\ell)$ we have
\[
\| x - v \| \le \| x \| - \frac{\delta M}{4} < \| x \|=R_2.
\] 
But, since $\Lambda$ is an approximate subgroup with $diam(F) = K$, we have that there exists $v \in B_K(\ell)$ such that $y + v  \in \Lambda$. This implies:
\[
\| z- (y+v) \| < R_2.
\]
Therefore, there exists $r < R_2$ such that $B_{r}(z) \cap \Lambda \neq \emptyset$. So, we get a contradiction. Therefore, indeed, for every $x \in L_R$ we have $B_{R'}(x) \cap \Lambda \neq \emptyset$. This finishes the proof of the theorem. 

\qed
\medskip

\subsection{\textbf{Proof of Theorem \ref{complete-classification}}}

``$\Rightarrow$": If $\Lambda \subset \bR^d$ is an infinite discrete approximate subgroup, then by Theorem \ref{main-thm} the set $\Lambda$ is relatively dense around a certain subspace $L \subset \bR^d$. Therefore, there exists $R > 0$ such that 
\begin{itemize}
\item $\Lambda \subset L + B_R(0_{\bR^d})$,
\item For every $z \in L$ we have $\Lambda \cap B_R(z) \neq \emptyset$.
\end{itemize}
Let $L^{\perp} \subset \bR^d$ be the orthogonal complement of $L$. Let $\Gamma \subset L^{\perp}$ be a lattice such that for any $x,y \in \Gamma$ we have $\| x - y \|_{L^{\perp}} \ge 4R$. Denote by $\Lambda ' = \Lambda + \Gamma$. Obviously, $\Lambda = \Lambda' \cap (L+B_R(0_{\bR^d}))$. We claim that  $\Lambda'$ is a Meyer set in $\bR^d$, i.e., discrete relatively dense approximate subgroup. Indeed, first notice that 
\[
\Lambda' - \Lambda' = (\Lambda - \Lambda) + (\Gamma - \Gamma) \subset (\Lambda + \Gamma) + F = \Lambda' + F,
\]
for some finite set $F \in \bR^d$. Also, $\Lambda'$ is discrete, since all different translates of $\Lambda$ by elements of $\Gamma$ are far apart. If $\lambda_1,\lambda_2  \in \Lambda'$, assume that $\lambda_1 = \ell_1+\gamma_1, \lambda_2 = \ell_2 + \gamma_2$ for $\ell_i \in \Lambda$, $\gamma_i \in \Gamma$, $i=1,2$, then 
\[
\lambda_1 - \lambda_2 = (\ell_1-\ell_2) + (\gamma_1-\gamma_2).
\]
If $\gamma_1 \neq \gamma_2$, then $\| \lambda_1 - \lambda_2 \| \ge 2R$. And in the case  $\gamma_1 = \gamma_2$ we use the uniform discreteness of $\Lambda$ to obtain a uniform bound on $\| \lambda_1 - \lambda_2 \|$, for $\lambda_1 \neq \lambda_2$. Finally, the relative density of $\Lambda'$ follows immediately from the relative density of $\Lambda$ around the subspace $L$ and the relative density of $\Gamma$ inside $L^{\perp}$.
\medskip

``$\Leftarrow$": Let $\Lambda' \subset \bR^d$ be a Meyer set. Let $R > 0$  be such that for any $x \in \bR^d$ we have $B_{R/2}(x) \cap \Lambda' \neq \emptyset$. Take any linear subspace $L \subset \bR^d$. Denote by $\Lambda = \Lambda' \cap (L + B_R(0_{\bR^d}))$. Then $\Lambda$ is an infinite discrete approximate subgroup. The only non-trivial claim is that $\Lambda$ is an approximate subgroup. To prove it, we will use Lagarias' theorem saying that if $\Lambda''$ is relatively dense in  $\bR^d$ and  $\Lambda'' - \Lambda''$ is uniformly discrete, then $\Lambda''$ is an approximate subgroup, i.e., there exists a finite set $F \subset \bR^d$ such that $\Lambda'' - \Lambda'' \subset \Lambda'' + F$. First, we construct such $\Lambda''$. Take a lattice $\Gamma \subset L^{\perp}$ satisfying that for any distinct $\gamma_1,\gamma_2 \in \Gamma$ we have $\| \gamma_1 - \gamma_2 \| \ge 4R$. Then define $\Lambda'' = \Lambda + \Gamma$. Obviously, $\Lambda''$ is relatively dense in $\bR^d$. We also have:
\[
\Lambda'' - \Lambda'' \subset (\Lambda' - \Lambda') \cap (L + B_{2R}(0_{\bR^d})) + \Gamma.
\]
This implies that $\Lambda'' - \Lambda''$ is uniformly discrete, and therefore, by Lagarias theorem,  there exists  a finite set $F \subset \bR^d$ with $\Lambda'' - \Lambda'' \subset \Lambda'' + F$.  
The latter implies that 
\[
\Lambda - \Lambda + \Gamma \subset \Lambda + \Gamma + F. 
\]
We claim that there exists $F' \subset \bR^d$ finite such that $\Lambda - \Lambda \subset \Lambda + F$. Indeed, for any  $\ell_1,\ell_2 \in \Lambda$ there exist $\ell_3 \in \Lambda, \gamma \in \Gamma$ and $f \in F$ such that 
\[
\ell_1 - \ell_2 = \ell_3 + \gamma + f.
\]
But the projection of  $\Lambda - \Lambda - \Lambda - F$ onto $L^{\perp}$ is at bounded distance from the origin, i.e., there exists $R' > 0$ such that  $\pi_{L^{\perp}}(\Lambda - \Lambda - \Lambda - F) \subset B_{R'}(0_{\bR^d}) \cap L^{\perp}$, where the operator $\pi_{L^{\perp}}$ is the orthogonal projection onto $L^{\perp}$. This implies that every such $\gamma \in \Gamma$ for which there exist $\ell_1,\ell_2,\ell_3 \in \Lambda$ and $f \in F$ with $\gamma = \ell_1 - \ell_2 - \ell_3 - f$ is at bounded distance from the origin in $L^{\perp}$. But there are only finitely many $\gamma \in \Gamma$ which lie in the ball 
$B_{R'}(0_{\bR^d}) \cap L^{\perp}$. Denote by $F_2$ the finite set $F_2 = \Gamma \cap B_{R'}(0_{\bR^d})$, and by $F' = F + F_2$. Then we have 
\[
\Lambda - \Lambda \subset \Lambda + F'.
\]
\qed
\medskip

\subsection{\textbf{Proof of Theorem \ref{cor}}}

It follows immediately from Theorem \ref{main-thm} that if $\Lambda \subset \bZ^d$ is an infinite approximate group, then there exists a subspace $L \subset \bR^d$ and $R > 0$ such that $\Lambda \subset L + B_R(0_{\bR^d})$, and for every $\ell \in L$ we have that $\Lambda \cap B_R(\ell) \neq \emptyset$. Let us call any $\Lambda$ that satisfies these constraints with respect to a subspace $L$ as being \textit{relatively dense around $L$}.  
\medskip

On the other hand, assume that $\Lambda \subset \bZ^d$ is relatively dense around a subspace $L \subset \bR^d$. We will show that such $\Lambda$ is necessarily an approximate subgroup. 

Indeed, let us first take $R_1 > 0$ with the property\footnote{We can take any $R_1 >\frac{\sqrt{d}}{2}$.} that 
for any point $x \in \bR^d$ we have $B_{R_1}(x) \cap \bZ^d \neq \emptyset$. Since, for any $\lambda \in \Lambda$ there exists $\ell \in L$ such that $\lambda \in B_R(\ell)$, we have that for any $\lambda_1,\lambda_2 \in \Lambda$ there exist
$x_1,x_2 \in \bZ^d \cap L+B_{R_1}(0)$ such that 
\[
\lambda_i \in B_{R+R_1}(x_i), \mbox{ for } i = 1,2.
\]
Therefore, there exist $f_1,f_2 \in B_{R+R_1}(0) \cap \bZ^d$ such that
\[
\lambda_i = x_i + f_i, \mbox{ for } 1 = 1,2.
\]
Also, notice that $x_1 - x_2 \in L + B_{2R_1}(0)$. Therefore, there exists $\lambda \in \Lambda$ such that $x_1 - x_2 \in B_{3R}(\lambda)$. Thus, there exists $f' \in B_{3R}(0) \cap \bZ^{d}$ such that $x_1-x_2 = \lambda + f'$. Finally, 
let us denote by $F = B_{5R + 2R_1}(0) \cap \bZ^d$ (finite set). Then we have
\[
\lambda_1 - \lambda_2 = (x_1 + f_1) - (x_2 + f_2) = (x_1 - x_2) + (f_1 - f_2) = \lambda + (f_1 - f_2 + f') \in \Lambda + F.
\]
This finishes the proof of the Theorem.

\qed
\medskip

\subsection{\textbf{Proof of Proposition \ref{meyer}}}

Let $\Lambda$ be a discrete approximate subgroup in $\bR^d$. By Theorem \ref{main-thm} we know that there exist a subspace $L$ and $R > 0$ such that $\Lambda$ is relatively dense around $L$, i.e., $\Lambda \subset L + B_R(0_{\bR^d})$ and for any $x \in L$ we have $B_R(x) \cap \Lambda \neq \emptyset$. Let us denote by $\pi$ the orthogonal projection from $\bR^d$ to $L$. And let $\Lambda_L = \pi(\Lambda)$. 
\medskip

By linearity of the map $\pi$ we get that $\Lambda_L$ is an approximate subgroup. For $\ell_1,\ell_2 \in \Lambda_L$  there exist $\lambda_1,\lambda_2 \in \Lambda$ such that $\ell_i = \pi(\lambda_i), i = 1,2$. Denote by $L^{\perp}$ the orthogonal complement to $L$, i.e., we have $\bR^d = L \oplus L^{\perp}$. Then there exist $\mu_1,\mu_2 \in L^{\perp}$ such that 
\[
\lambda_i = \ell_i + \mu_i, \mbox{ for } i = 1,2.
\]
But $\Lambda$ is an approximate subgroup. Therefore, there exists a finite set $F \subset \bR^d$ such that $\Lambda - \Lambda \subset \Lambda + F$. This implies that there exist $\lambda \in \Lambda,$ and $f \in F$ such that
\[
\lambda_1 - \lambda_2 =\lambda + f.
\]
By projecting both sides on $L$ we obtain:
\[
 \ell_1 - \ell_2 = \pi(\lambda) + \pi(f).
\]
Let us denote $F' = \pi(F)$ (a finite set). Then we have
\[
\Lambda_L - \Lambda_L \subset \Lambda_L + F'.
\] 

We also have that $\Lambda_L$ is relatively dense in $L$ since $L \subset \Lambda_L + B_{2R}(0_{\bR^d})$.
\medskip

The set $\Lambda_L$ is discrete. Indeed, assume that it is not discrete. Then there exists $(\ell_n) \subset \Lambda_L$ with $\ell_n \to x \in L$ and $\ell_n \neq x$ for every $n$. Let $(\mu_n) \subset L^{\perp}$ such that $\lambda_n = \ell_n + \mu_n \in \Lambda$. Since all $\mu_n$ are bounded, then there is a convergent subsequence $(\mu_{n_k})$. Denote its limit by $\mu \in L^{\perp}$. Then we have 
\[
\lambda_{n_k} = \ell_{n_k} + \mu_{n_k} \to x + \mu.
\]
Since $\Lambda$ is discrete, this implies that the sequence $\lambda_{n_k}$ is fixed for $k$ large enough. This implies that the subsequence $\ell_{n_k}$ is fixed for $k$ large enough and we get a contradiction.
\medskip

All this together, shows that the set $\Lambda_L \subset L$ is a Meyer set. Finally, by the construction we have $\Lambda \subset \Lambda_L + B_R(0_{\bR^d})$.

\qed

\section{\textbf{Discrete approximate subgroups in the Heisenberg group}}

Assume that $n \ge 1$, and $\Lambda \subset H_{2n+1}$ is a discrete infinite approximate subgroup. Denote by $\Lambda_V = \pi_V(\Lambda)$. Our fist claim follows from the definition of an approximate group and the linearity of the projection operator $\pi_V$

\begin{lemma}
\label{heis-lem-1}
The set $\Lambda_V$ is an approximate subgroup in $V$.
\end{lemma}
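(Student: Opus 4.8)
The plan is to use the single structural fact that makes this lemma essentially formal: the projection $\pi_V \colon H_{2n+1} \to V$ is a group homomorphism onto the abelian group $V$. This is immediate from the multiplication law $(v_1,z_1)\cdot(v_2,z_2) = (v_1+v_2, z_1+z_2+\tfrac12\omega(v_1,v_2))$, which gives $\pi_V(h_1 h_2) = \pi_V(h_1) + \pi_V(h_2)$ and hence also $\pi_V(h^{-1}) = -\pi_V(h)$. So the entire argument is to push the approximate-subgroup relation for $\Lambda$ forward through $\pi_V$.

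Concretely, I would first record the normalisation data: since $e_H = (0,0) \in \Lambda$ we get $0 \in \Lambda_V$, and since $\Lambda^{-1} = \Lambda$ we get $-\Lambda_V = \pi_V(\Lambda^{-1}) = \pi_V(\Lambda) = \Lambda_V$, so $\Lambda_V$ is symmetric (these are not required by the abelian definition of approximate subgroup, but they match the conventions used elsewhere in the paper). Then I would take the finite set $F \subset H_{2n+1}$ with $\Lambda^{-1}\Lambda \subset F\Lambda$ and apply $\pi_V$ to both sides. Because $\pi_V$ is a homomorphism into an abelian group, $\pi_V(\Lambda^{-1}\Lambda) = \Lambda_V - \Lambda_V$ and $\pi_V(F\Lambda) = \pi_V(F) + \Lambda_V$, so $\Lambda_V - \Lambda_V \subset \pi_V(F) + \Lambda_V$. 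Since $\pi_V(F)$ is finite, this is precisely the statement that $\Lambda_V$ is an approximate subgroup of $V$.

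I do not expect any real obstacle here; the only points requiring a little care are the bookkeeping between the multiplicative notation on $H_{2n+1}$ and the additive notation on $V$, and the observation that the abelian definition of approximate subgroup imposes no discreteness hypothesis — so at this stage we neither need nor can assert anything about discreteness of $\Lambda_V$ (indeed one of the examples in the paper shows $\Lambda_V$ may be dense in a line). Controlling the fine structure of $\Lambda_V$ is exactly what the later, substantially harder, part of the Heisenberg argument is for.
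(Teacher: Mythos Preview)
Your proposal is correct and matches the paper's approach exactly: the paper simply remarks that the claim ``follows from the definition of an approximate group and the linearity of the projection operator $\pi_V$'' and gives no further argument. Your write-up fills in precisely that one-line observation, pushing $\Lambda^{-1}\Lambda \subset F\Lambda$ through the homomorphism $\pi_V$ to get $\Lambda_V - \Lambda_V \subset \pi_V(F) + \Lambda_V$.
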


Since the proof of Theorem \ref{main-thm} does not use the discreteness of an approximate subgroup in $V$ but only its unboundness, we derive that there exists a linear subspace $L \subset V$ such that $\Lambda_V$ is relatively dense around $L$. Our next claim will use the identity (\ref{import_id}).

\begin{lemma}
\label{lem2}
If $\omega(\Lambda_V,\Lambda_V) \neq 0$ and $\dim L \ge 1$, then $[\Lambda,\Lambda]$ is relatively dense in $Z$, and $\Lambda_V$ is discrete in $V$.
\end{lemma}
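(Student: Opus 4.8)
The plan is to transport the problem to the quotient $V=H_{2n+1}/Z$ via the commutator identity (\ref{import_id}) and then to use the structure of $\Lambda_V=\pi_V(\Lambda)$ already available: by Lemma \ref{heis-lem-1} it is an approximate subgroup of $V$, and since the proof of Theorem \ref{main-thm} uses only unboundedness, $\Lambda_V$ is relatively dense around $L=Span(D(\Lambda_V))$. The first assertion is then quick: by (\ref{import_id}) the commutator subgroup is $[\Lambda,\Lambda]=\{(0,t)\,:\,t\in\Gamma\}$, where $\Gamma\le(\bR,+)$ is the subgroup generated by the symmetric set $\omega(\Lambda_V,\Lambda_V)$; since $\omega(\Lambda_V,\Lambda_V)\neq 0$ the group $\Gamma$ is non-trivial, and a non-trivial subgroup of $\bR$ is relatively dense, so $[\Lambda,\Lambda]$ is relatively dense in $Z\cong\bR$. (If moreover $\omega(\,\cdot\,,b)$ does not vanish identically on $L$ for some $b\in\Lambda_V$, then $\omega(\,\cdot\,,b)|_L$ is onto $\bR$, and since $\Lambda_V$ lies in a bounded neighbourhood of $L$ and meets every bounded neighbourhood of every point of $L$, even the plain set $\{(0,\omega(v,b))\,:\,v\in\Lambda_V\}$ is relatively dense in $Z$.)

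For the discreteness of $\Lambda_V$ I would argue by contradiction. Since $\Lambda$ is symmetric we have $\Lambda_V=-\Lambda_V$, hence $\Lambda_V-\Lambda_V=\Lambda_V+\Lambda_V\subseteq\Lambda_V+F'$ for a finite set $F'$; if $\Lambda_V$ were not discrete then $0$ would be an accumulation point of $\Lambda_V+F'$, so $\Lambda_V$ itself would have an accumulation point. Starting the cone/almost-doubling construction of Proposition \ref{simple-prop} and Lemma \ref{lemma} from such a point along an asymptotic direction $u\in D(\Lambda_V)\subseteq L$ (which exists since $\dim L\ge 1$), one obtains accumulation points $p_n$ of $\Lambda_V$ with $\|p_n\|\to\infty$ and $p_n/\|p_n\|\to u$. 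For each $n$ choose distinct $v_{n,j}\in\Lambda_V$ with $v_{n,j}\to p_n$ as $j\to\infty$, together with lifts $\mu_{n,j}=(v_{n,j},z_{n,j})\in\Lambda$; since $\Lambda$ is uniformly discrete in $H_{2n+1}$ while the $V$-coordinates $v_{n,j}$ converge, the heights $z_{n,j}$ can have no accumulation point, so $|z_{n,j}|\to\infty$ along a subsequence.

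The contradiction is to come from the symplectic pairing, and the key auxiliary fact is that $\omega(\Lambda_V,\Lambda_V)$ is a locally finite subset of $\bR$: a bounded number of products stays inside an approximate subgroup, so $[\Lambda,\Lambda]\subseteq F^{(c)}\Lambda$ for a fixed finite set $F^{(c)}$, and the elements of $(F^{(c)}\Lambda)\cap Z$ have bounded $V$-coordinate, so by uniform discreteness of $\Lambda$ their $Z$-coordinates lie in a locally finite set. Consequently, for any $w\in\Lambda_V$ the numbers $\omega(v_{n,j}-v_{n,k},w)$ lie in a finite union of translates of $\omega(\Lambda_V,\Lambda_V)$ (using $v_{n,j}-v_{n,k}\in\Lambda_V+F'$), hence in a locally finite set; but $v_{n,j}-v_{n,k}\to 0$ forces this quantity to $0$, so $\omega(v_{n,j}-v_{n,k},w)=0$ for all large $j,k$. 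Running this over a finite spanning subset of $\Lambda_V$, and then playing it against a pair $a,b\in\Lambda_V$ with $\omega(a,b)\neq 0$ and against the unbounded direction $u$, should yield two distinct elements of $\Lambda$ at arbitrarily small distance in $H_{2n+1}$, which is the desired contradiction.

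I expect this final step to be the main obstacle. It amounts to ruling out that the accumulation of $\Lambda_V$ is ``symplectically invisible'', i.e.\ that all differences $v_{n,j}-v_{n,k}$ (and $u$ itself) lie in the radical of $\omega$ restricted to $Span(\Lambda_V)$; to exclude this one has to use $\omega(\Lambda_V,\Lambda_V)\neq 0$ jointly with the relative density around $L$, not the two inputs separately. This is exactly the interplay that makes the relevant subgroup $H'$ in Theorem \ref{heis-thm} non-abelian, and it is the technical heart of the lemma.
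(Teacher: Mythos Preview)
Your main argument has a gap: in this context $[\Lambda,\Lambda]$ is the \emph{set} of commutators $\{[\lambda_1,\lambda_2]:\lambda_i\in\Lambda\}=\{(0,\omega(v_1,v_2)):v_i\in\Lambda_V\}$, not the subgroup it generates, so ``a non-trivial subgroup of $\bR$ is relatively dense'' does not apply. Your parenthetical remark is the right argument and is essentially what the paper does: exhibit $b\in\Lambda_V$ and a line $L_0\subset L$ with $\omega(\,\cdot\,,b)|_{L_0}\not\equiv 0$; since $\Lambda_V$ is relatively dense around $L\supset L_0$ and $\omega(\,\cdot\,,b)|_{L_0}$ is a nonzero linear functional, the set $\{\omega(u,b):u\in\Lambda_V\}$ is already relatively dense in $\bR$.

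\textbf{Discreteness of $\Lambda_V$.} Here there is a genuine missing idea, and you identify the obstacle yourself: the heights $z_{n,j}$ of your lifts escape to infinity, and nothing in the ``symplectic invisibility'' analysis brings them back. The paper's route is much shorter and sidesteps this entirely by \emph{feeding the first conclusion of the lemma into the second}. Since $[\Lambda,\Lambda]\subset\Lambda^4\subset F'\Lambda$ for a finite $F'$, the relative density of $[\Lambda,\Lambda]$ in $Z$ yields, for a relatively dense set of $t\in\bR$, some $f\in F'^{-1}$ with $f\cdot(0,t)\in\Lambda$. Now if $\Lambda_V$ were not discrete, take $(v_n,z_n)\in\Lambda$ with $v_n\to v$ and the $v_n$ distinct; for each $n$ choose such $t_n$ with $t_n+z_n$ bounded. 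Then
\[
\bigl(f_n\cdot(0,t_n)\bigr)\cdot(v_n,z_n)=f_n\cdot(v_n,\,t_n+z_n)\ \in\ \Lambda\cdot\Lambda\ \subset\ F\Lambda,
\]
and after passing to a subsequence with $f_n\equiv f$ constant, $(v_n,t_n+z_n)$ is a bounded sequence with distinct $V$-coordinates lying in the discrete set $f^{-1}F\Lambda$ --- a contradiction. No cone construction and no study of the symplectic radical is needed; the relative density in $Z$ is precisely the tool that absorbs the unbounded $z$-coordinates.
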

\begin{proof}
By the identity (\ref{import_id}), it follows that for any two elements $\lambda_1 = (v,z), \lambda_2 = (u,t)$, their commutator 
\begin{equation}
\label{ident}
[\lambda_1,\lambda_2] = (0, \omega(v,u)).
\end{equation}
 Also, by the assumptions of the lemma, there exist a line $L_0$ in $V$, and $R > 0$ such that for every $\ell \in L_0$ there exists $v_{\ell} \in \Lambda_V$ with $\| \ell - v_{\ell} \|_{V} \le R$. It is also clear from the assumptions that there exists $v \in \Lambda_V$ such that $v \not \in L_0$. Then it follows from the continuity of the symplectic form $\omega$ that the set 
\[
\left\{ \omega(v,u) \, | \, u \in \Lambda_V \right\}
\]
is relatively dense in $\bR$. The identity (\ref{ident}) implies that $[\Lambda,\Lambda]$ is relatively dense in $Z$. 
The only remaining part of the lemma that we have to prove is the discreteness of $\Lambda_V$.
Since $\Lambda$ is an approximate group in $H_{2n+1}$, it follows that there exists a finite set $F' \subset H$ ($F' = F F F$) such that 
\[
[\Lambda,\Lambda] \subset F' \Lambda.
\]
Thus there exists a relatively dense sequence $(t_n) \subset \bR$ such that for every $n$ corresponds at least one 
$f_n$ from the finite set  $F'^{-1}$ with $f_n(0,t_n) \in \Lambda$.  
Assume that $\Lambda_V$ is non-discrete. Then there exists a sequence $(v_n,z_n) \in \Lambda$ with $v_n \to v$  such that $v_n \neq v$ for all $n$. Then by applying from the left the elements $f_n (0,t_n)$ with $t_n+z_n$ is in a compact set in $\bR$ we have the new sequence 
\[
f_n (v_n, t_n+z_n) \subset F'^{-1}F \Lambda.
\]
But now we achieved that the new sequence is inside a compact set in $H_{2n+1}$. Thus, without loss of generality, we assume that the sequence $f_n (v_n, t_n + z_n)$ converges. 
Since $f_n$'s belong to a finite set $F'^{-1}$, by taking a subsequence, we can assume that  $f_n = f$ and $(v_n,t_n+z_n)$ converges to $(v,t)$ for some $t \in \bR$. Since the element $f_n$ is fixed, there exists a finite set $F''$ such that $(v_n,t_n+z_n) \subset F'' \Lambda$. But the set on the right hand side is discrete, while the sequence on the left hand side is not. We get a contradiction and it finishes the proof of the lemma.
\end{proof}

\subsection{\textbf{Proof of Theorem \ref{heis-thm}}}
Let $\Lambda$ be an infinite discrete approximate subgroup in the Heisenberg group $H_{2n+1}$. As we already noticed, the projection $\Lambda_V$ of $\Lambda$ onto $V$ is relatively dense around a subspace $L \subset V$. 
If $L = \{ 0 \}$, then by the boundness of $\Lambda_V$ and using the same reasoning as in the proof of Proposition \ref{simple-prop} we obtain that $\Lambda$ is relatively dense around the centre $Z$ of $H_{2n+1}$. Now assume that $\dim L \ge 1$. Then there are two cases:
\medskip

\begin{itemize}
\item[(1)] $\omega(\Lambda_V, \Lambda_V) = 0$,\\
\item[(2)] $\omega(\Lambda_V,\Lambda_V) \neq 0$.
\end{itemize} 
\medskip

In the first case, there exists a Lagrangian subspace $L' \subset V$ such that $\Lambda_V \subset L'$, and $\omega(L',L') = 0$. Then we make use of Schreiber's theorem with respect to the abelian group $V' = L' \times Z$ and conclude that there exists a subspace $L'' \subset V'$ such that $\Lambda$ is relatively dense around $L''$. This abelian subgroup $L''$ is clearly a connected subgroup of $H_{2n+1}$.
\medskip

In the second case, we invoke Lemma \ref{lem2} and obtain that $\Lambda$ is relatively dense around the connected subgroup 
$H' = LZ$, where 
\[
LZ = \{ (v,z) \, | \, v \in L, z \in \bR \}.
\]

To prove the last part of the theorem, we notice that $H'$ around which the subgroup $\Lambda$ is relatively dense is non-abelian only in the last case, i.e., $H' = \{ (v,z) \, | \, v \in L, z \in \bR \}$, and $\omega(L,L) \neq 0$. Then by Lemma \ref{lem2} we are done. 
\qed

%

\subsection{\textbf{Proof of Proposition \ref{heis-prop}}} If $\Lambda_V = \pi_V(\Lambda)$ is relatively dense in $V$, then $\omega(\Lambda_V, \Lambda_V) \neq 0$. By Lemma \ref{lem2}, we get that $[\Lambda,\Lambda]$ is relatively dense in $Z$. This easily implies the conclusion of the proposition.
\qed

\subsection{\textbf{Proof of Proposition \ref{ful-class-impossible}}}
Let $\Lambda$ be as in the statement of the proposition. Assume that there exists $\Lambda' \subset H_3$ such that $\Lambda \subset \Lambda'$ and $\Lambda'$ is relatively dense in $H_3$. Then the projection $\Lambda'_V$ of $\Lambda'$ onto $V$ is non discrete. On other hand, it follows from Lemma \ref{lem2} that $\Lambda'_V$ is discrete. We get a contradiction.
\qed
\medskip

\end{document}